\documentclass[11pt,reqno]{amsart}

\usepackage{amssymb,amsmath,amsthm,amscd,latexsym,amsfonts}
\usepackage{mathtools}
\usepackage[T1]{fontenc}

\usepackage{graphicx}
\usepackage{color}
\usepackage[a4paper,top=3cm, bottom=3cm, left=3cm, right=2cm]{geometry}

\newtheorem{thm}{Theorem}

\newtheorem{lemma}{Lemma}
\newtheorem{pro}{Proposition}

\newtheorem{cor}{Corollary}

\numberwithin{equation}{section} \setcounter{tocdepth}{1}

\newcommand{\bea}{\begin{eqnarray}}
\newcommand{\eea}{\end{eqnarray}}

\newcommand{\R}{\mathbb{R}}




\def\R{\mathbb{R}}


\begin{document}
\title [Dynamics of the China's five element philosophy]
{Dynamics of quadratic operators generated by China's Five elements philosophy}

\author {N.N.Ganikhodjaev, C.H. Pah, U.A. Rozikov}

\address{N.\ N.Ganikhodjaev\\ Department of Computational and Theoretical Sciences, Faculty of Science, IIUM, Kuantan, Malaysia.}
 \email {gnasir@iium.edu.my}

 \address{C.\ H. Pah\\ Department of Computational and Theoretical Sciences, Faculty of Science, IIUM, Kuantan, Malaysia.}
 \email {pahchinhee@iium.edu.my}

 \address{U.\ A.\ Rozikov\\ Institute of mathematics, 29, Do'rmon Yo'li str., 100125,
Tashkent, Uzbekistan.} \email {rozikovu@yandex.ru}

\begin{abstract} Motivating by the China's five element philosophy (CFEP)
we construct a permuted Volterra quadratic
stochastic operator acting on the four dimensional simplex.
This operator (depending on 10 parameters) is considered
as an evolution operator for CFEP.
We study the discrete time dynamical system generated by this operator.
Mainly our results related to a symmetric operator (depending on one parameter).
We show that this operator has a unique fixed point, which is repeller.
Moreover, in the case of non-zero parameter, it has two 5-periodic orbits.
We divide the simplex to four subsets: the first set consists a single point (the fixed point);
the second (resp. third) set is the set of initial points trajectories of which converge
to the first (resp. second) 5-periodic orbit; the fourth subset is the set of initial
points trajectories of which do not converge
and their sets of limit points are infinite and lie on the boundary of the simplex.
We give interpretations of our results to CFEP. 
\end{abstract}

\subjclass[2010]{37N25}

\keywords{quadratic stochastic operator.}

\maketitle

\section{Introduction}

The China's Five element philosophy model (CFEPM) gives interaction between 5 phases: Wood, Fire, Earth, Metal and Water.
They are usually used to describe the state in nature, five phases are around 72 days each:\footnote{https://en.wikipedia.org/wiki/Wu$_-$Xing}

\emph{Wood/Spring:} a period of growth, which generates abundant wood and vitality,

\emph{Fire/Summer:} a period of swelling, flowering, brimming with fire and energy,

\emph{Earth:} the in-between transitional seasonal periods, or a separate 'season'
known as Late Summer or Long Summer - in the latter case
associated with leveling and dampening (moderation) and fruition,

\emph{Metal/Autumn:} a period of harvesting and collecting,

\emph{Water/Winter:} a period of retreat, where stillness and storage pervades.

The doctrine of five phases describes two cycles, a generating or creation cycle,
and an overcoming or destruction cycle of interactions between the phases (see Fig. \ref{ff1}).
  \begin{figure}
\centering
\includegraphics[width=8cm]{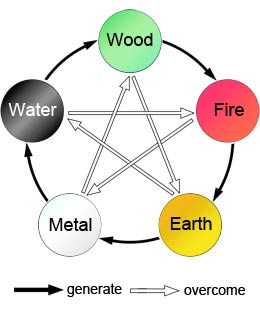}
\caption{Five elements (phases) and the interactions between them. Source: https://www.travelchinaguide.com/intro/astrology/five-elements.htm }
\label{ff1}
\end{figure}

For simplicity of notations we denote
$$1=Wood, \ \ 2=Fire, \ \ 3=Earth, \ \ 4=Metal, \ \ 5=Water.$$

Let $x=(x_1, x_2, x_3, x_4, x_5)$ be a probability distribution on the set $E=\{1,2,3,4,5\}$.
Consider this vector as a state of the CFEPM, and let an operator $W$ describes its future evolution, i.e.,
if in the next 'generation' the state of the CFEPM is $x'$ then $x'=W(x)$. This operator is called an evolution
operator.

In this paper using the interactions between phases we construct
an evolution operator $W$ acting on the four dimensional simplex.
This operator is quadratic stochastic operator (QSO) (see \cite{K}, \cite{L}),
which is non-Volterra, but permuted Volterra QSO.

In \cite{G}, \cite{G1}, using theory of the Lyapunov
function and tournaments, the theory of Volterra QSO was developed.
But non-Volterra QSOs were not in completely studied. Because there is no any general
theory which can be applied for investigation of non-Volterra operators.
In recent years a non-Volterra QSO has become of interest in connection with
its numerous applications in many branches of mathematics, biology and physics
 (see \cite{G2}-\cite{GMR}, \cite{GM}-\cite{GD}, \cite{M}, \cite{ME}, \cite{RS}-\cite{RJ} for the theory and applications of such operators).
Since there is no any general
theory for investigation of non-Volterra operators, each such operator requires a corresponding approach.

The paper is organized as follows:
In Sec. 2, we give some preliminary definitions. In Section 3 we construct an evolution operator describing CFEPM.
Section 4 is devoted to a detailed analysis of a symmetric case. There we give all fixed points, periodic points
and study behavior of all trajectories. It is shown that there is not converging trajectory:
some of them go to a 5-periodic orbit, but others have an infinite set of limit points being a subset of the boundary of the simplex.
We give interpretations of our results to CFEP in Section 5. 
\section{Definitions}\label{S:definitions}

Let $E=\{1,\dots,m\}$ be a finite set. The set of all probability measures on $E$ is
\[ S^{m-1}=\{x=(x_1,\dots,x_m)\in \R^m: x_i\geq 0, \ \sum^m_{i=1}x_i=1 \} \]
and called $(m-1)$-dimensional {\it simplex}.

The boundary of the simplex is
$$\partial S^{m-1}=\{x\in S^{m-1}: \prod_{i=1}^mx_i=0\}.$$

The inside of the simplex is
$${\rm int} S^{m-1}=\{x\in S^{m-1}: \prod_{i=1}^mx_i>0\}.$$

A \emph{quadratic stochastic operator} (QSO) is a
(quadratic) mapping from the simplex
into itself, of the form
\begin{equation}\label{ov}
 V: x_k'=\sum^m_{i,j=1}P_{ij,k}x_ix_j, \qquad (k=1,\dots,m),
\end{equation}
where $P_{ij,k}$ are the coefficients of heredity and
\begin{equation}\label{as1}
P_{ij,k}\geq 0, \qquad \sum^m_{k=1}P_{ij,k}=1, \qquad  (i,j,k=1,\dots,m).
\end{equation}
A matrix $(P_{ij,k})$ satisfying (\ref{as1}) is called stochastic.

 The discrete-time dynamical
 system generated by $V$ is the following sequence
 \begin{equation}\label{2}
 x^{(0)},\ \ x^{(1)}= V(x^{(0)}), \ \ x^{(2)}=V^{2}(x^{(0)}),\ \  x^{(3)}= V^{3}(x^{(0)}),\dots
\end{equation}
where $V^n(x)=\underbrace{V(V(...V}_n(x))...)$ denotes the $n$ times
  iteration of $V$ to $x$.

The sequence (\ref{2}) is also called a {\it trajectory} (or forward {\it orbit}) of
$x^{(0)}$ under action of the operator $V$.  For a review on dynamical systems see \cite{Rb}.

{\bf The main problem} for a given dynamical system is to
describe the limit points of $\{x^{(n)}\}_{n=0}^\infty$ for
arbitrary given $x^{(0)}$ (see \cite{GMR} for a review).

Let $\{x^{(n)}\}_{n=1}^\infty$ be the trajectory of the point
$x^{(0)}\in S^{m-1}$ under QSO (\ref{ov}). Denote by $\omega(x^0)$ the set of
limit points of the trajectory. Since $S^{m-1}$ is compact and  $\{x^{(n)}\}\subset S^{m-1}$,
it follows that $\omega(x^0)\ne
\emptyset.$
If $\omega(x^0)$ consists of a single point,
then the trajectory converges, and $\omega(x^0)$ is a fixed point of
(\ref{ov}).

A {\it Volterra QSO} is defined by the additional assumption
$$ P_{ij,k}=0, \ \ \mbox{if}\ \ k\not\in \{i,j\},\,\,
\forall i,j,k\in E.$$

The general form of Volterra QSO  (see \cite{G})
$$ V: x= (x_{1},...,x_{m}) \in S^{m-1}\,
\rightarrow\,V(x)= x'= (x'_1,...,x'_m)\,\in S^{m-1}
$$ is given by
$$x'_k=x_k\left(1+\sum^m_{i=1}a_{ki}x_i\right),\ \ k\in E,$$
where
$$a_{ki}=2P_{ik,k}-1 \ \ \mbox{for}\, i\neq k\,\,\mbox{and}\
\,a_{ii}=0, \ i\in E.$$  Moreover
$$a_{ki}=-a_{ik}\ \ \mbox{and} \ \ |a_{ki}| \leq 1.$$
Denote by $A=(a_{ij})_{i,j=1}^m$ the skew-symmetric matrix with
entries defined above.

\section{Evolution operator}
Denote by $P_{ij,k}$ the probability of the event that $i$ and $j$ interact and product $k$.
Assume that
\begin{equation}
P_{ij,k}=P_{ji,k}, \forall i,j,k\in \{1,2,3,4,5\}.
\end{equation}
By this condition it suffices to define $P_{ij,k}$ for any $i\leq j$.

The Fig. \ref{ff1} suggests to consider the following
probabilities:

Since the generating interactions of the five elements are like
the conception we can assume that
$$P_{11,2}=P_{22,3}=P_{33,4}=P_{44,5}=P_{55,1}=1,$$
that is wood fuels fire, fire forms earth, earth contains metal, metal carries water and
water feeds wood.
Considering the generating and overcoming interactions we have
$$\begin{array}{llllllllll}
P_{12,1}=0, \ \  P_{12,2}=a, \ \ P_{12,3}=1-a, \ \ P_{12,4}=0, \ \ P_{12,5}=0\\[2mm]
P_{13,1}=0, \ \  P_{13,2}=b, \ \ P_{13,3}=0, \ \ P_{13,4}=1-b, \ \ P_{13,5}=0\\[2mm]
P_{14,1}=0, \ \  P_{14,2}=c, \ \ P_{14,3}=0, \ \ P_{14,4}=0, \ \ P_{14,5}=1-c\\[2mm]
P_{15,1}=d, \ \  P_{15,2}=1-d, \ \ P_{15,3}=0, \ \ P_{15,4}=0, \ \ P_{15,5}=0\\[2mm]
P_{23,1}=0, \ \ P_{23,2}=0, \ \ P_{23,3}=\alpha, \ \ P_{23,4}=1-\alpha, \ \ P_{23,5}=0\\[2mm]
P_{24,1}=0, \ \ P_{24,2}=0, \ \ P_{24,3}=\beta, \ \ P_{24,4}=0, \ \ P_{24,5}=1-\beta\\[2mm]
P_{25,1}=\gamma, \ \ P_{25,2}=0, \ \ P_{25,3}=1-\gamma, \ \ P_{25,4}=0, \ \ P_{25,5}=0\\[2mm]
P_{34,1}=0, \ \ P_{34,2}=0, \ \ P_{34,3}=0, \ \ P_{34,4}=\delta, \ \ P_{34,5}=1-\delta\\[2mm]
P_{35,1}=\theta, \ \ P_{35,2}=0, \ \ P_{35,3}=0, \ \ P_{35,4}=1-\theta, \ \ P_{35,5}=0\\[2mm]
P_{45,1}=\omega, \ \ P_{45,2}=0, \ \ P_{45,3}=0, \ \ P_{45,4}=0, \ \ P_{45,5}=1-\omega
\end{array}
$$
where $a,b,c,d,\alpha, \beta, \gamma, \delta, \theta, \omega\in [0,1]$.

These probabilities define the following quadratic stochastic operator $W:S^4\to S^4$:
\begin{equation}\label{w}
W:\left\{\begin{array}{lllll}
x_1'=x_5\left[1+(2d-1)x_1+(2\gamma-1)x_2+(2\theta-1)x_3+(2\omega-1)x_4\right]\\[2mm]
x_2'=x_1\left[1+(2a-1)x_2+(2b-1)x_3+(2c-1)x_4-(2d-1)x_5\right]\\[2mm]
x_3'=x_2\left[1-(2a-1)x_1+(2\alpha-1)x_3+(2\beta-1)x_4-(2\gamma-1)x_5\right]\\[2mm]
x_4'=x_3\left[1-(2b-1)x_1-(2\alpha-1)x_2+(2\delta-1)x_4-(2\theta-1)x_5\right]\\[2mm]
x_5'=x_4\left[1-(2c-1)x_1-(2\beta-1)x_2-(2\delta-1)x_3-(2\omega-1)x_5\right]
\end{array}\right.
\end{equation}
Consider permutation
$$\pi =\left(\begin{array}{ll}
1 \ \ 2 \ \ 3 \ \ 4 \ \ 5\\[2mm]
5 \ \ 1 \ \ 2 \ \ 3 \ \ 4
\end{array}\right)$$
Define mapping $T_\pi :S^4\to S^4$ as
$$T_\pi(x)=T_\pi((x_1,\dots,x_5))=(x_5, x_1, x_2, x_3, x_4).$$
Consider also the following Volterra operator $V: S^4\to S^4$ defined by
\begin{equation}
V:\left\{\begin{array}{lllll}
y_1=x_1\left[1+(2a-1)x_2+(2b-1)x_3+(2c-1)x_4-(2d-1)x_5\right]\\[2mm]
y_2=x_2\left[1-(2a-1)x_1+(2\alpha-1)x_3+(2\beta-1)x_4-(2\gamma-1)x_5\right]\\[2mm]
y_3=x_3\left[1-(2b-1)x_1-(2\alpha-1)x_2+(2\delta-1)x_4-(2\theta-1)x_5\right]\\[2mm]
y_4=x_4\left[1-(2c-1)x_1-(2\beta-1)x_2-(2\delta-1)x_3-(2\omega-1)x_5\right]\\[2mm]
y_5=x_5\left[1+(2d-1)x_1+(2\gamma-1)x_2+(2\theta-1)x_3+(2\omega-1)x_4\right]
\end{array}\right.
\end{equation}

Note that $V$ is non-ergodic Volterra QSO \cite{GGJ}.

The corresponding skew-symmetric matrix of $V$ is the following
 \begin{equation}
A=\left(\begin{array}{ccccc}
0& 2a-1&2b-1&2c-1&-(2d-1)\\[2mm]
-(2a-1)&0&2\alpha-1&2\beta-1&-(2\gamma-1)\\[2mm]
-(2b-1)&-(2\alpha-1)&0&2\delta-1&-(2\theta-1)\\[2mm]
-(2c-1)&-(2\beta-1)&-(2\delta-1)&0&-(2\omega-1)\\[2mm]
2d-1&2\gamma-1&2\theta-1&2\omega-1&0
\end{array}\right).
\end{equation}
The following lemma is obvious
\begin{lemma} We have
$$W=T_\pi\circ V.$$
\end{lemma}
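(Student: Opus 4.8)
The plan is to prove the identity by a direct coordinate‑by‑coordinate comparison, since both $W$ and $T_\pi\circ V$ are given by fully explicit formulas on $S^4$. First I would compute $V(x)=(y_1,y_2,y_3,y_4,y_5)$ from the defining formulas for $V$, and then apply the coordinate permutation $T_\pi$. Since $\pi$ sends $1\mapsto 5,\ 2\mapsto 1,\ 3\mapsto 2,\ 4\mapsto 3,\ 5\mapsto 4$, and $T_\pi$ is defined by $(T_\pi(z))_k=z_{\pi(k)}$, this yields
\[
(T_\pi\circ V)(x)=T_\pi\bigl((y_1,y_2,y_3,y_4,y_5)\bigr)=(y_5,\,y_1,\,y_2,\,y_3,\,y_4).
\]

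Next I would match this vector, entry by entry, with $W(x)=(x_1',x_2',x_3',x_4',x_5')$ as written in (\ref{w}). One checks that $x_1'$ is literally the polynomial $y_5=x_5[1+(2d-1)x_1+(2\gamma-1)x_2+(2\theta-1)x_3+(2\omega-1)x_4]$, that $x_2'$ equals $y_1$, that $x_3'$ equals $y_2$, that $x_4'$ equals $y_3$, and that $x_5'$ equals $y_4$. Each of these is an equality of quadratic polynomials in $x_1,\dots,x_5$ with termwise identical coefficients, so all five identities are immediate; there is no genuine obstacle, and the only thing to be careful about is the bookkeeping of the cyclic shift induced by $\pi$ (that the fifth output of $V$ lands in the first slot and the remaining four shift down by one).

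Optionally I would also note the conceptual reason this is "obvious": the operator $W$ was built from the heredity coefficients $(P_{ij,k})$, whose Volterra‑type part is exactly the matrix $A$ of $V$, and the extra relabeling of the target index $k$ coming from the generating cycle $11\!\to\!2$, $22\!\to\!3,\dots,55\!\to\!1$ is precisely post‑composition with $T_\pi$. Thus $W=T_\pi\circ V$ holds by construction, and the computation above merely records it.
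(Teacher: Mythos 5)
Your verification is correct and is exactly the check the paper leaves implicit when it calls the lemma ``obvious'': applying $T_\pi$ (with $(T_\pi(z))_k=z_{\pi(k)}$, matching the paper's definition $T_\pi(x)=(x_5,x_1,x_2,x_3,x_4)$) to the output of $V$ reproduces the five coordinate formulas of $W$ in (\ref{w}) verbatim. Nothing further is needed; your concluding remark about the construction via the generating cycle is a fair conceptual gloss but not required.
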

Thus the evolution operator $W$, (\ref{w}), is a permuted Volterra operator.
As it was shown in \cite{RJ} the permuted Volterra operators may have completely
different dynamical systems compared with Volterra ones. In particular, the operator may have
periodic orbits (but Volterra operator does not have periodic orbits \cite{G}).

Denote $e_i=(\delta_{1i}, \delta_{2i}, \delta_{3i}, \delta_{4i}, \delta_{5i})$, $i=1,2,3,4,5$,
where $\delta_{ij}$ is the Kronecker symbol.

 The following 5-periodic orbit of $W$ is easily checked:
 $$e_1\to e_2\to e_3\to e_4\to e_5\to e_1.$$
For simplicity we introduce
$$A=2a-1, \ \ B=2b-1, \ \ C=2c-1, \ \ D=2d-1, \ \ E=2\alpha-1,$$
$$F=2\beta-1, \ \ G=2\gamma-1, \ \ H=2\delta-1, \ \ I=2\theta-1, \ \ J=2\omega-1.$$
We have $A,B,C,D,E,F,G,H,I,J\in [-1,1]$.

A fixed point of the operator $W$ is a solution of the equation $W(x)=x$, i.e., the following system
\begin{equation}\label{wfp}
\left\{\begin{array}{lllll}
x_1=x_5\left[1+Dx_1+Gx_2+Ix_3+Jx_4\right]\\[2mm]
x_2=x_1\left[1+Ax_2+Bx_3+Cx_4-Dx_5\right]\\[2mm]
x_3=x_2\left[1-Ax_1+Ex_3+Fx_4-Gx_5\right]\\[2mm]
x_4=x_3\left[1-Bx_1-Ex_2+Hx_4-Ix_5\right]\\[2mm]
x_5=x_4\left[1-Cx_1-Fx_2-Hx_3-Jx_5\right]
\end{array}\right.
\end{equation}
\begin{lemma}\label{lf1} There is no fixed point of $W$ on the boundary $\partial S^4$.
\end{lemma}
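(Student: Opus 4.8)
The plan is to argue by contradiction: suppose $x \in \partial S^4$ is a fixed point, so at least one coordinate vanishes, and show this forces all coordinates to vanish, contradicting $\sum x_i = 1$. The key structural feature to exploit is the cyclic shape of the system \eqref{wfp}: each equation expresses $x_k$ as $x_{k-1}$ (indices mod $5$, with $x_0 := x_5$) times a bracketed factor. So if $x_{k-1} = 0$ then $x_k = 0$, and iterating around the cycle gives $x_1 = x_2 = \dots = x_5 = 0$. The only way to avoid this cascade is if the bracketed factor multiplying a vanishing $x_{k-1}$ is itself problematic — but a factor times $0$ is still $0$, so in fact the cascade is unconditional. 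Hence the first step is simply: observe that the five equations have the form $x_k = x_{k-1} \cdot (\text{bracket}_k)$, and a single zero coordinate propagates cyclically to kill all coordinates.

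More carefully, I would write it as follows. Let $x = (x_1, x_2, x_3, x_4, x_5) \in \partial S^4$, so by definition $\prod_{i=1}^5 x_i = 0$ and thus $x_j = 0$ for some $j \in \{1,2,3,4,5\}$. Reading off the corresponding equation from \eqref{wfp}: $x_1$ is a multiple of $x_5$, $x_2$ of $x_1$, $x_3$ of $x_2$, $x_4$ of $x_3$, and $x_5$ of $x_4$. Therefore $x_j = 0 \Rightarrow x_{j+1} = 0 \Rightarrow \dots$, and after five steps we return to conclude every coordinate is zero. This contradicts $\sum_{i=1}^5 x_i = 1$. Since $x$ was an arbitrary boundary point satisfying $W(x) = x$, there is no fixed point on $\partial S^4$.

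I do not expect any real obstacle here; the content is entirely in recognizing the cyclic "product" structure of \eqref{wfp}, and the only thing to be slightly careful about is the index bookkeeping (the first equation has $x_5$ on the right-hand side, closing the cycle). One could alternatively phrase it without cases: from \eqref{wfp}, $x_1 x_2 x_3 x_4 x_5 = x_5 x_1 x_2 x_3 x_4 \cdot \prod_{k} (\text{bracket}_k)$, so if the product of coordinates is zero the cascade argument still applies coordinate-by-coordinate; but the case-free version adds nothing, so I would keep the short cyclic-propagation argument. No properties of the parameters $A,\dots,J$ are needed, which is why the lemma holds for the full $10$-parameter family.
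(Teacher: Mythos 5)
Your proposal is correct and is essentially the paper's own argument: a single zero coordinate propagates cyclically through the system (\ref{wfp}) (since each $x_{k}$ is $x_{k-1}$, indices mod $5$, times a bracket), forcing all coordinates to vanish and contradicting $\sum_i x_i=1$. The paper states this more tersely, but the mechanism is identical, and you correctly note that no conditions on the parameters are needed.
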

\begin{proof} Assume there is a fixed point $x^*=(x^*_1, x^*_2, x^*_3, x^*_4, x^*_5)\in \partial S^4$, then there is $i\in \{1,2,3,4,5\}$
such that $x_i^*=0$. Using this zero-coordinate from (\ref{wfp}) we get $x^*_1=x^*_2=x^*_3=x^*_4=x^*_5=0$ this contradiction
to that $x^*\in S^4$.
\end{proof}

In general the systems (\ref{wfp}) seems difficult to solve.
Therefore we consider some special choice of parameters.
\section{A symmetric case}
Assume
$$A=p, B=-p, C=p, D=p, E=p, F=-p, G=-p, H=p, I=p, J=-p,$$
where $p\in[-1,1]$. In this case the operator is

\begin{equation}\label{w1}
W: \left\{\begin{array}{lllll}
x'_1=x_5\left[1+p(x_1-x_2+x_3-x_4)\right]\\[2mm]
x'_2=x_1\left[1+p(x_2-x_3+x_4-x_5)\right]\\[2mm]
x'_3=x_2\left[1-p(x_1-x_3+x_4-x_5)\right]\\[2mm]
x'_4=x_3\left[1+p(x_1-x_2+x_4-x_5)\right]\\[2mm]
x'_5=x_4\left[1-p(x_1-x_2+x_3-x_5)\right]
\end{array}\right.
\end{equation}
Permuting coordinates of (\ref{w1}) we get a Volterra operator (as before denoted by $V$).

The following lemma is very useful
\begin{lemma}\label{lT} For (\ref{w1}) we have
\begin{itemize}
\item[1.] $W=T_\pi\circ V=V\circ T_\pi.$
\item[2.] $T_\pi\circ V^n=V^n\circ T_\pi$
\item[3.] $W^n=T_\pi^i \circ V^n \ \ \mbox{if} \ \ n=5k+i, \ \ i=0,1,2,3,4, \ \ k=0,1,2,\dots$
\end{itemize}
\end{lemma}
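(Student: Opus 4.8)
The plan is to exploit the cyclic symmetry hidden in the special parameters of (\ref{w1}). The first step is to write the Volterra operator $V$ associated with (\ref{w1}) in an explicit uniform form. Specializing the skew-symmetric matrix of $V$ to $A=p$, $B=-p$, $C=p$, $D=p$, $E=p$, $F=-p$, $G=-p$, $H=p$, $I=p$, $J=-p$ (equivalently, reading off $V$ from the defining relation $W=T_\pi\circ V$ and comparing with (\ref{w1}) term by term), one checks that $V$ has the $k$-independent shape
\begin{equation*}
V:\qquad y_k=x_k\bigl[\,1+p\,(x_{k+1}-x_{k+2}+x_{k+3}-x_{k+4})\,\bigr],\qquad k=1,\dots,5,
\end{equation*}
all subscripts read modulo $5$ with representatives in $\{1,\dots,5\}$. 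The point of this form is that the coefficient pattern does not depend on $k$, which is exactly what forces $V$ to commute with cyclic shifts of the coordinates.

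Next I would observe that $T_\pi(x)=(x_5,x_1,x_2,x_3,x_4)$ is precisely such a cyclic shift, namely $T_\pi(x)_k=x_{k-1}$ with indices mod $5$; it is a linear bijection of $S^4$ onto itself and $T_\pi^5=\mathrm{id}$. Using the form above, the identity $V\circ T_\pi=T_\pi\circ V$ is a one-line index computation: for every $k$ both $V(T_\pi x)_k$ and $T_\pi(Vx)_k$ equal $x_{k-1}\bigl[1+p(x_k-x_{k+1}+x_{k+2}-x_{k+3})\bigr]$. Combined with the relation $W=T_\pi\circ V$ (which holds by construction of the symmetric $V$, i.e.\ it is the first Lemma of Section 3 specialized to the parameters of (\ref{w1})), this yields part 1: $W=T_\pi\circ V=V\circ T_\pi$.

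Parts 2 and 3 then follow formally. Part 2, $T_\pi\circ V^n=V^n\circ T_\pi$, is an immediate induction on $n$ from the commutation $T_\pi V=V T_\pi$, sliding $T_\pi$ past one factor $V$ at a time. For part 3, since $T_\pi$ and $V$ commute I may expand $W^n=(T_\pi\circ V)^n=T_\pi^n\circ V^n$; writing $n=5k+i$ with $i\in\{0,1,2,3,4\}$ and using $T_\pi^5=\mathrm{id}$ collapses $T_\pi^n$ to $T_\pi^i$, which is exactly the claimed formula $W^n=T_\pi^i\circ V^n$.

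No step here is genuinely difficult; once the $k$-independent form of $V$ is in hand the lemma is a short formal manipulation. The only place that requires any care --- and hence the "main obstacle", such as it is --- is the bookkeeping of indices modulo $5$ needed to verify that the symmetric Volterra operator $V$ is invariant under the cyclic coordinate shift $T_\pi$.
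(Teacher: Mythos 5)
Your proposal is correct and follows essentially the same route as the paper: verify the commutation $T_\pi\circ V=V\circ T_\pi$ directly (the paper dismisses this as ``straightforward,'' while you supply the mod-$5$ index check via the cyclically uniform form of $V$), then obtain parts 2 and 3 as formal consequences using $T_\pi^5=\mathrm{id}$. Your derivation of part 3 via $(T_\pi\circ V)^n=T_\pi^n\circ V^n$ is a slightly more streamlined bookkeeping than the paper's step-by-step computation of $W^5=V^5$, but it rests on exactly the same ingredients.
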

\begin{proof}
1. Straightforward.

2. Obtained by iterating the equality of part 1.

3. Since $T^5_\pi=id$ using parts 1 and 2 we get $W^{5k}=V^{5k}$, indeed,
$$W^5=W^4(T_\pi\circ V)=W^3(T_\pi\circ V\circ T_\pi\circ V)=W^3(T^2_\pi\circ V^2)=W^2(T_\pi\circ V\circ T^2_\pi\circ V^2)$$
$$=W^2(T^3_\pi\circ V^3)=W(T_\pi\circ V\circ T^3_\pi\circ V^3)=W(T^4_\pi\circ V^4)=T^5_\pi\circ V^5=V^5.$$
This equality and $W^i=T^i_\pi \circ V^i$, $i=0,1,2,3,4$ gives
$$W^n=W^{5k+i}=W^{5k}(W^i)=V^{5k}(T^i_\pi \circ V^i)=T_\pi^i \circ V^n.$$

\end{proof}
\subsection{Fixed points}
The following proposition gives fixed points of (\ref{w1}):

\begin{pro} For any $p\in [-1,1]$ the operator (\ref{w1}) has a unique fixed point $$P=(1/5, 1/5, 1/5, 1/5, 1/5).$$
\end{pro}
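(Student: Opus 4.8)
The plan is to reduce the question to the dynamics of the Volterra operator $V$ via Lemma~\ref{lT}. First I would check that $P$ is a fixed point by direct substitution: at $P$ every combination such as $x_1-x_2+x_3-x_4$ or $x_2-x_3+x_4-x_5$ occurring inside the brackets of (\ref{w1}) vanishes, so each bracket equals $1$ and $W(P)=P$. The substance of the proposition is therefore uniqueness.

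For uniqueness I would take an arbitrary fixed point $x^{*}$ of $W$ and iterate five times: $x^{*}=W^{5}(x^{*})$, and by Lemma~\ref{lT}(3) in the case $n=5$, $i=0$ one has $W^{5}=V^{5}$, hence $V^{5}(x^{*})=x^{*}$. Thus $x^{*}$ is a periodic point of the Volterra QSO $V$ of period dividing $5$. Invoking the classical fact that Volterra operators have no periodic orbits of period $\geq 2$ (\cite{G}), the period must be $1$, so $V(x^{*})=x^{*}$. Then by Lemma~\ref{lT}(1), $x^{*}=W(x^{*})=(T_\pi\circ V)(x^{*})=T_\pi(x^{*})$, and since $T_\pi(x)=(x_5,x_1,x_2,x_3,x_4)$, the equality $T_\pi(x^{*})=x^{*}$ forces $x^{*}_1=x^{*}_2=\dots=x^{*}_5$; combined with $\sum_{i=1}^5 x^{*}_i=1$ this yields $x^{*}=P$.

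I expect the only subtle step to be the appeal to the no-periodic-orbits theorem, and the point worth being careful about is that it is used only to identify $\operatorname{Fix}(V^{5})$ with $\operatorname{Fix}(V)$: the operator $W$ itself does carry a $5$-cycle $e_1\to e_2\to e_3\to e_4\to e_5\to e_1$, but each $e_i$ is a fixed point of $V$ (Volterra QSOs fix the vertices of the simplex), so no contradiction arises. If one prefers a self-contained argument, I would instead note that in this symmetric case the skew-symmetric matrix $A$ has all row sums equal to $0$, so $\varphi(x)=(x_1x_2x_3x_4x_5)^{1/5}$ is a strict Lyapunov function for $V$ (by concavity of $\log$, $\varphi(V(x))\le\varphi(x)$ with equality iff $V(x)=x$), which directly rules out non-fixed periodic points of $V$; the remaining steps are unchanged. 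Note that Lemma~\ref{lf1} is not actually needed for this proof, since $x^{*}=T_\pi(x^{*})$ already forces equality of all coordinates without assuming $x^{*}\in\operatorname{int}S^4$.
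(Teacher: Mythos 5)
Your argument is correct, but it is a genuinely different route from the paper's. The paper proves uniqueness directly from the fixed-point system: it first invokes Lemma~\ref{lf1} to ensure any fixed point has all coordinates positive, multiplies the five equations to get that the product of the five brackets equals $1$, and then applies the arithmetic--geometric mean inequality (the brackets sum to $5$) to force every bracket to equal $1$, whence $x_1=\dots=x_5=1/5$. You instead reduce to the Volterra operator: from $W^5=V^5$ (Lemma~\ref{lT}(3)) a fixed point of $W$ is a $5$-periodic point of $V$, the cited theorem of \cite{G} (which the paper itself quotes in Section 3, ``Volterra operator does not have periodic orbits'') gives $V(x^*)=x^*$, and then $W(x^*)=x^*$ forces $T_\pi(x^*)=x^*$, i.e.\ all coordinates equal. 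This buys a shorter argument that indeed bypasses Lemma~\ref{lf1} (the no-periodic-orbit fact applies on $\partial S^4$ as well, since faces are $V$-invariant and restrictions are again Volterra), at the price of leaning on a nontrivial external theorem where the paper is self-contained. One caveat on your proposed self-contained substitute: the Lyapunov argument with $\varphi(x)=(x_1\cdots x_5)^{1/5}$ (which is essentially the paper's AM--GM computation transplanted to $V$) only yields ``equality iff $V(x)=x$'' on ${\rm int}\,S^4$; on the boundary both sides vanish trivially, and the principal submatrices governing the faces need not have zero row sums, so to make that variant airtight for a possibly boundary fixed point you would either have to argue on faces separately or reinstate Lemma~\ref{lf1}, exactly as the paper does.
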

\begin{proof}
The system of equations for the fixed point is
\begin{equation}\label{wf1}
\left\{\begin{array}{lllll}
x_1=x_5\left[1+p(x_1-x_2+x_3-x_4)\right]\\[2mm]
x_2=x_1\left[1+p(x_2-x_3+x_4-x_5)\right]\\[2mm]
x_3=x_2\left[1-p(x_1-x_3+x_4-x_5)\right]\\[2mm]
x_4=x_3\left[1+p(x_1-x_2+x_4-x_5)\right]\\[2mm]
x_5=x_4\left[1-p(x_1-x_2+x_3-x_5)\right]
\end{array}\right.
\end{equation}
If $x\in S^4$ is a fixed point then by Lemma \ref{lf1} we have $x_1x_2x_3x_4x_5>0$.
Then from (\ref{wf1}) we get
$$\psi(x)=\left[1+p(x_1-x_2+x_3-x_4)\right]\left[1+p(x_2-x_3+x_4-x_5)\right]\left[1-p(x_1-x_3+x_4-x_5)\right]$$
\begin{equation}\label{ch}
\times\left[1+p(x_1-x_2+x_4-x_5)\right]\left[1-p(x_1-x_2+x_3-x_5)\right]=1.
\end{equation}
For the LHS of (\ref{ch}) using the arithmetic-geometric mean inequality,
we get
$$\psi(x)=\left[1+p(x_1-x_2+x_3-x_4)\right]\left[1+p(x_2-x_3+x_4-x_5)\right]\left[1-p(x_1-x_3+x_4-x_5)\right]$$
\begin{equation}\label{ch1}
\times\left[1+p(x_1-x_2+x_4-x_5)\right]\left[1-p(x_1-x_2+x_3-x_5)\right]\leq 1
\end{equation}
and it is known that the equality in (\ref{ch1}) holds only if
$$\left[1+p(x_1-x_2+x_3-x_4)\right]=\left[1+p(x_2-x_3+x_4-x_5)\right]=\left[1-p(x_1-x_3+x_4-x_5)\right]$$
\begin{equation}\label{ch2}
=\left[1+p(x_1-x_2+x_4-x_5)\right]=\left[1-p(x_1-x_2+x_3-x_5)\right].
\end{equation}
Then from (\ref{ch}) it follows that each these numbers equal to 1 (i.e.,  $[...]=1$).
Consequently, from the system of equations (\ref{wf1}) we get $x_1=x_2=x_3=x_4=x_5$, and since $x\in S^4$
we obtain a unique fixed point $P=(1/5, 1/5, 1/5, 1/5, 1/5)$.
\end{proof}
\begin{pro}\label{pb} The fixed point, $P$, is a repeller for any $p\ne 0$ and it is a saddle point for $p=0$.
\end{pro}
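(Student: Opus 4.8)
The plan is to linearize $W$ at $P$ and compare the moduli of the eigenvalues of the derivative, restricted to the tangent space of $S^4$, with $1$. Since $T_\pi$ is linear with matrix $\Pi$ (the cyclic shift $v\mapsto(v_5,v_1,v_2,v_3,v_4)$) and $V(P)=P=T_\pi(P)$, Lemma~\ref{lT} together with the chain rule gives $DW|_P=\Pi\cdot DV|_P$. For the Volterra factor I would use the form $y_k=x_k\big(1+\sum_i a_{ki}x_i\big)$: in the present symmetric case the skew-symmetric matrix of $V$ equals $pM$, where
\[
M=\begin{pmatrix}0&1&-1&1&-1\\-1&0&1&-1&1\\1&-1&0&1&-1\\-1&1&-1&0&1\\1&-1&1&-1&0\end{pmatrix},
\]
and one checks that every row of $M$ sums to $0$. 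Since all coordinates of $P$ equal $1/5$, this gives $\sum_i a_{ki}(1/5)=\tfrac p5\sum_i M_{ki}=0$ for each $k$, and differentiating $V$ at $P$ yields the particularly simple $DV|_P=I+\tfrac p5 M$; hence $DW|_P=\Pi+\tfrac p5\,\Pi M$.

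The remark that makes the spectrum explicit is that $M$ is a circulant matrix; in fact $M=-\Pi+\Pi^2-\Pi^3+\Pi^4$, so $M$ is a polynomial in $\Pi$ and the two commute. Consequently $DW|_P$ is diagonalized by the Fourier vectors $v^{(\ell)}=(1,\zeta^\ell,\zeta^{2\ell},\zeta^{3\ell},\zeta^{4\ell})$, $\ell=0,1,2,3,4$, with $\zeta=e^{2\pi i/5}$: on $v^{(\ell)}$ the shift $\Pi$ acts by $\zeta^{-\ell}$ and $M$ acts by $\mu_\ell:=\zeta^\ell-\zeta^{2\ell}+\zeta^{3\ell}-\zeta^{4\ell}$, so the eigenvalues of $DW|_P$ are
\[
\lambda_\ell=\zeta^{-\ell}\Big(1+\tfrac p5\,\mu_\ell\Big),\qquad \ell=0,1,2,3,4.
\]
Here $v^{(0)}=(1,1,1,1,1)$ is transverse to the simplex and $\mu_0=0$, $\lambda_0=1$; the vectors $v^{(1)},\dots,v^{(4)}$ span the tangent space $\{\sum_i v_i=0\}$, so the local behaviour of $W$ at $P$ is governed by $\lambda_1,\dots,\lambda_4$.

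Next I would observe that $M$ is real and skew-symmetric, hence each $\mu_\ell$ is purely imaginary, so $|\lambda_\ell|=\big|1+\tfrac p5\mu_\ell\big|=\sqrt{1+\tfrac{p^2}{25}|\mu_\ell|^2}$. It then only remains to see that $\mu_\ell\neq0$ for $\ell=1,2,3,4$: if $\mu_\ell=0$ then $\zeta^\ell+\zeta^{3\ell}=\zeta^{2\ell}+\zeta^{4\ell}$, that is $(1+\zeta^{2\ell})(\zeta^\ell-\zeta^{2\ell})=0$, which is impossible since $\zeta^\ell\neq1$ and $-1$ is not a fifth root of unity. Therefore, for $p\neq0$ we get $|\lambda_\ell|=\sqrt{1+p^2|\mu_\ell|^2/25}>1$ for every $\ell\in\{1,2,3,4\}$, so all eigenvalues of the linearization along $S^4$ lie strictly outside the unit circle and $P$ is a repeller. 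For $p=0$ one has $DW|_P=\Pi$ and $\lambda_\ell=\zeta^{-\ell}$, hence $|\lambda_\ell|=1$ for $\ell=1,2,3,4$; the fixed point is then non-hyperbolic, i.e.\ a saddle in the terminology used here (indeed $W=T_\pi$ is a linear isometry, so $P$ neither attracts nor repels).

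The only real difficulty I foresee is the linear-algebra bookkeeping: correctly assembling $DW|_P=\Pi\cdot DV|_P$, noticing the zero-row-sum property of $M$ that collapses $DV|_P$ to $I+\tfrac p5M$, and recognizing the circulant structure so that the eigenvalues come out in closed form. Once those are in place, the inequality $|\lambda_\ell|>1$ is immediate. A less transparent alternative would be to skip the circulant observation and expand the characteristic polynomial of the explicit $5\times5$ (or reduced $4\times4$) Jacobian directly, but that computation is considerably more cumbersome.
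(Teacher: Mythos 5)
Your proof is correct, but it follows a genuinely different route from the paper. The paper reduces to a $4\times 4$ Jacobian in simplex coordinates, writes its characteristic quartic in $q=p/5$, argues via the sign of a discriminant (checked from a plotted graph) that all roots are non-real, and then quotes computer/quartic-formula output giving the moduli as $(5\mp 2\sqrt5)q^2+1$. You instead keep the full $5\times5$ derivative, use the factorization $W=T_\pi\circ V$ to get $DW|_P=\Pi\bigl(I+\tfrac p5 M\bigr)$ (the zero row sums of the skew-symmetric circulant $M=-\Pi+\Pi^2-\Pi^3+\Pi^4$ doing the collapsing), diagonalize by Fourier vectors, and read off $\lambda_\ell=\zeta^{-\ell}\bigl(1+\tfrac p5\mu_\ell\bigr)$ with $\mu_\ell$ purely imaginary and nonzero for $\ell=1,2,3,4$, so $|\lambda_\ell|=\sqrt{1+\tfrac{p^2}{25}|\mu_\ell|^2}>1$ on the tangent space $\{\sum_i v_i=0\}$. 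This is cleaner and fully analytic: it removes the graphical/computer-assisted steps and in fact explains the paper's numbers, since $|\mu_1|^2=|\mu_4|^2=5-2\sqrt5$ and $|\mu_2|^2=|\mu_3|^2=5+2\sqrt5$, so the paper's $f_1,f_2$ are exactly the squared moduli $|\lambda_\ell|^2$ (consistent with the constant term $5q^4+10q^2+1$ of their quartic being $\prod_\ell|\lambda_\ell|$ over conjugate pairs). The only soft spot is the $p=0$ case: there $W=T_\pi$, all eigenvalues lie on the unit circle and $P$ is non-hyperbolic (neutral), which you acknowledge; calling this a ``saddle'' is loose, but the paper's own proof is no more rigorous on that point, so you are on equal footing there.
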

\begin{proof}
The proof is based on the analysis of eigenvalues of the Jacobian
at the fixed point $P$. Denoting $q=p/5$, the equation for eigenvalues can be written as
\begin{equation}\label{e4}\lambda^4+(-5q+1)\lambda^3+(15q^2+1)\lambda^2+(-15q^3-5q^2-5q+1)\lambda+5q^4+10q^2+1=0.
\end{equation}
Note that the LHS of this equation is positive for any $q\in [-1/5, 1/5]$, to see this it suffices
to show that $(15q^2+1)\lambda^2+(-15q^3-5q^2-5q+1)\lambda+5q^4+10q^2+1>0$. The last inequality
can be checked by showing that the discriminant of the quadratic equation is negative:
$$D(q)=(-15q^3-5q^2-5q+1)^2-4(15q^2+1)(5q^4+10q^2+1)<0.$$
Since $q$ is in finite set, the last inequality can be seen from the graph of function $D(q)$ (see Fig. \ref{fD}).
  \begin{figure}
\centering
\includegraphics[width=10cm]{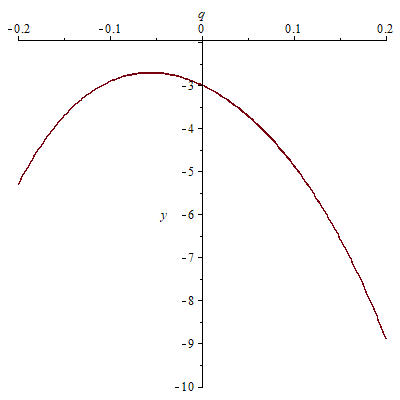}
\caption{ The graph of $D(q)$ in the domain $[-0.2, 0.2]$ (where it is defined).}
\label{fD}
\end{figure}
Thus LHS of (\ref{e4}) is strictly positive, therefore it has four {\it complex} solutions.
By a computer or using known formulas\footnote{see https://en.wikipedia.org/wiki/Quartic$_-$function}
one can obtain an explicit solution of the
quartic equation (as a function of $q=p/5\in [-0.2,0.2]$),
then the absolute values of two non-conjugate solutions has the form
$$f_1(q)=(5-2\sqrt{5})q^2+1,$$
$$f_2(q)=(5+2\sqrt{5}) q^2+1.$$
One can see that they are >1 iff $q\ne 0$.
\end{proof}

\subsection{Periodic points}

For $p=0$ operator (\ref{w1}) coincides with $T_\pi$, for which any point is 5-periodic.

Let us first solve the equation
\begin{equation}\label{pe1}
W(x)=T_\pi (x)=(x_5, x_1, x_2, x_3, x_4).
\end{equation}
This is equivalent (for $p\ne 0$) to
\begin{equation}\label{pe2}
\left\{\begin{array}{lllll}
x_5(x_1-x_2+x_3-x_4)=0\\[2mm]
x_1(x_2-x_3+x_4-x_5)=0\\[2mm]
x_2(x_1-x_3+x_4-x_5)=0\\[2mm]
x_3(x_1-x_2+x_4-x_5)=0\\[2mm]
x_4(x_1-x_2+x_3-x_5)=0
\end{array}\right.
\end{equation}
The following lemma says that solutions of this system are invariant
with respect to permutation operator $T_\pi$.
\begin{lemma}\label{k} If $x=(x_1, x_2, x_3, x_4, x_5)$ is a solution to (\ref{pe2})
then $T_\pi(x)$ is also its solution.
\end{lemma}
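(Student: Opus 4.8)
The cleanest route is to exploit the commutation identities of Lemma \ref{lT}, so I would organise the argument around them. For $p\neq 0$ the system (\ref{pe2}) is, by the computation preceding it, nothing but the single vector equation $W(x)=T_\pi(x)$. Assuming $x$ solves it, apply $W$ to $T_\pi(x)$ and use $W=T_\pi\circ V$ together with $V\circ T_\pi=W$ (both from Lemma \ref{lT}(1)):
\[
W\big(T_\pi(x)\big)=T_\pi\big(V(T_\pi(x))\big)=T_\pi\big(W(x)\big)=T_\pi\big(T_\pi(x)\big).
\]
This is exactly the assertion that $T_\pi(x)$ again satisfies $W(y)=T_\pi(y)$, i.e. (\ref{pe2}). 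In short, the lemma is just the statement that $T_\pi$ preserves the solution set of $W(y)=T_\pi(y)$, and this is forced by the fact that $W$ commutes with $T_\pi$ through $V$.

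For completeness — and to cover the degenerate value $p=0$, where (\ref{pe2}) is a genuine polynomial system rather than literally $W(x)=T_\pi(x)$ — I would also record the direct substitution, which moreover uses no value of $p$ and so settles all cases at once. Writing $y=T_\pi(x)=(x_5,x_1,x_2,x_3,x_4)$ and plugging into the left-hand sides of (\ref{pe2}), a line-by-line check shows that the cyclic index shift carries the $k$-th defining expression evaluated at $y$ to $\pm$ the $(k-1)$-st one evaluated at $x$ (indices read mod $5$): for instance $y_5(y_1-y_2+y_3-y_4)=-x_4(x_1-x_2+x_3-x_5)$ and $y_1(y_2-y_3+y_4-y_5)=x_5(x_1-x_2+x_3-x_4)$, and likewise for the other three. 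Since $x$ satisfies (\ref{pe2}), each of these right-hand sides vanishes, hence so do the corresponding left-hand sides for $y$; thus $T_\pi(x)$ solves (\ref{pe2}).

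I do not expect a genuine obstacle here: the lemma is a symmetry statement and the relevant symmetry is precisely $W=T_\pi\circ V=V\circ T_\pi$. The only point requiring a moment's care in the computational version is the sign bookkeeping — four of the five alternating-sum linear factors flip sign under the cyclic shift, while the factor $x_1-x_2+x_3-x_4$ attached to $x_5$ is carried to itself — but since only vanishing matters, the signs are immaterial, so neither route runs into any real difficulty.
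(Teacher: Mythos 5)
Your proposal is correct. Your second, ``for completeness'' argument is essentially the paper's own proof: the paper simply substitutes $T_\pi(x)=(x_5,x_1,x_2,x_3,x_4)$ into (\ref{pe2}), obtaining a shifted system, and matches its $k$-th equation with the $(k-1)$-st equation of (\ref{pe2}); your sign bookkeeping is in fact slightly more careful than the paper's, which calls the matched equations ``identical'' although four of the five linear factors are negatives of each other (harmless, since only vanishing matters, exactly as you say). Your first argument is a genuinely different and more conceptual route: for $p\neq 0$ the system (\ref{pe2}) is the equation $W(x)=T_\pi(x)$, and the commutation $W=T_\pi\circ V=V\circ T_\pi$ from Lemma \ref{lT} immediately gives $W(T_\pi(x))=T_\pi(W(x))=T_\pi(T_\pi(x))$, so $T_\pi$ preserves the solution set; this explains \emph{why} the lemma holds (it is a symmetry statement) rather than verifying it entry by entry, and since (\ref{pe2}) does not involve $p$, fixing any single $p\neq 0$ in that argument already settles the lemma in full, so your separate treatment of $p=0$ via substitution, while perfectly fine, is not strictly needed. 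Both routes are valid; the paper's substitution is self-contained and elementary, while yours exposes the underlying commutation structure that the paper exploits elsewhere (e.g.\ in the subsequent proposition computing $W^i(x)=T_\pi^i(x)$).
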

\begin{proof} Putting in (\ref{pe2}) the vector $(x_5, x_1, x_2, x_3, x_4)$ we get
\begin{equation}\label{pe3}
\left\{\begin{array}{lllll}
x_4(x_5-x_1+x_2-x_3)=0\\[2mm]
x_5(x_1-x_2+x_3-x_4)=0\\[2mm]
x_1(x_5-x_2+x_3-x_4)=0\\[2mm]
x_2(x_5-x_1+x_3-x_4)=0\\[2mm]
x_3(x_5-x_1+x_2-x_4)=0
\end{array}\right.
\end{equation}
Comparing the first (resp. 2-nd, 3-rd, 4-th, 5-th) equation of (\ref{pe3}) with the 5-th (resp. 1-st, 2-nd, 3-rd, 4-th)
equation of (\ref{pe2}) one can see that they are identical.
\end{proof}
 As corollary of this lemma we get
 \begin{pro} If $x$ is a solution to (\ref{pe1}) then
$W^i(x)=T_\pi^i(x), \ \ i=1,2,3,4,5$.
\end{pro}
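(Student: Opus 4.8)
The plan is to prove the statement by induction on $i$, using the previous lemma (invariance of solutions of (\ref{pe2}) under $T_\pi$) together with part 3 of Lemma~\ref{lT}. First I would observe that the hypothesis ``$x$ is a solution to (\ref{pe1})'' means precisely $W(x)=T_\pi(x)$, so the base case $i=1$ is immediate. For the inductive step, suppose $W^i(x)=T_\pi^i(x)$ for some $i$ with $1\le i\le 4$. Then
\[
W^{i+1}(x)=W\bigl(W^i(x)\bigr)=W\bigl(T_\pi^i(x)\bigr).
\]
The key point is that $T_\pi^i(x)$ is again a solution of (\ref{pe2}), hence of (\ref{pe1}): indeed, by Lemma~\ref{k} the set of solutions of (\ref{pe2}) is $T_\pi$-invariant, so applying that lemma $i$ times shows $T_\pi^i(x)$ solves (\ref{pe2}); and for $p\ne0$ the system (\ref{pe2}) is equivalent to (\ref{pe1}), so $W\bigl(T_\pi^i(x)\bigr)=T_\pi\bigl(T_\pi^i(x)\bigr)=T_\pi^{i+1}(x)$. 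Combining, $W^{i+1}(x)=T_\pi^{i+1}(x)$, which closes the induction and yields the claim for $i=1,2,3,4,5$.

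A cleaner way to package the same argument, which I would probably present instead, is to prove directly by induction that $W^i(x)=T_\pi^i(x)$ \emph{and} $T_\pi^i(x)$ solves (\ref{pe2}), carrying the second clause along as part of the inductive hypothesis; this avoids invoking ``apply Lemma~\ref{k} $i$ times'' as a separate sub-induction. Either way the engine is Lemma~\ref{k}.

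The only subtlety — and the one place to be careful — is the case $p=0$, since the equivalence of (\ref{pe1}) and (\ref{pe2}) was asserted only for $p\ne0$. For $p=0$ the operator (\ref{w1}) literally equals $T_\pi$, so $W^i=T_\pi^i$ holds on the whole simplex and the statement is trivial; I would mention this in one sentence so the proposition is correct for all $p$. No genuine obstacle arises: everything reduces to the already-proved $T_\pi$-invariance of the solution set plus the trivial fact $W=T_\pi$ on solutions.
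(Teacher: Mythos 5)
Your proof is correct and follows essentially the same route as the paper: iterate the relation $W(y)=T_\pi(y)$ using Lemma~\ref{k} to guarantee that $T_\pi^i(x)$ again solves (\ref{pe2}), hence (\ref{pe1}); the paper merely spells out the $i=2$ step by explicit computation before running the same induction for $i=3,4,5$. Your separate one-line treatment of $p=0$ (where $W=T_\pi$ identically) is a small but welcome extra precaution that the paper leaves implicit.
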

 \begin{proof} We have
\begin{equation}\label{pe4}
W^2(x)=W(W(x))=W(T_\pi(x))=\left\{\begin{array}{lllll}
x_4[1+p(x_5-x_1+x_2-x_3)]\\[2mm]
x_5[1+p(x_1-x_2+x_3-x_4)]\\[2mm]
x_1[1-p(x_5-x_2+x_3-x_4)]\\[2mm]
x_2[1+p(x_5-x_1+x_3-x_4)]\\[2mm]
x_3[1-p(x_5-x_1+x_2-x_4)]
\end{array}\right.
\end{equation}
From (\ref{pe4}) by (\ref{pe3}) we get $W^2(x)=(x_4, x_5, x_1, x_2, x_3)=T^2_\pi(x).$

Using the last equality and Lemma \ref{k}, for $i=3$ (and then for $i=4,5$) we get
$$W^i(x)=W(W^{i-1}(x))=W(T^{i-1}_\pi(x))=T_\pi(T_\pi^{i-1}(x))=T^i_\pi(x).$$
 \end{proof}
 Thus we have the following
 \begin{cor} Any solution of (\ref{pe2}) generates a 5-periodic orbit for the operator (\ref{w1}).
  \end{cor}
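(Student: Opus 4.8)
The plan is to deduce the statement directly from the preceding proposition together with the fact that the permutation $\pi$ has order $5$, so that $T_\pi^5=id$ (as was already used in the proof of Lemma \ref{lT}). First I would dispose of the degenerate parameter value $p=0$ separately: there the operator (\ref{w1}) coincides with $T_\pi$, every point of $S^4$ is $5$-periodic, and there is nothing to prove. So from now on assume $p\ne 0$.

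For $p\ne 0$ the system (\ref{pe2}) is, as noted just above, equivalent to the equation $W(x)=T_\pi(x)$, i.e.\ to (\ref{pe1}). Hence every solution $x$ of (\ref{pe2}) satisfies the hypothesis of the preceding proposition, which yields $W^i(x)=T_\pi^i(x)$ for $i=1,2,3,4,5$. Since $T_\pi^5=id$, this gives $W^5(x)=T_\pi^5(x)=x$. Therefore the forward orbit of $x$ under $W$ is the finite set $\{x,T_\pi(x),T_\pi^2(x),T_\pi^3(x),T_\pi^4(x)\}$, and it is periodic with period dividing $5$.

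Finally I would check that the period is in fact exactly $5$ for every solution $x$ other than the central fixed point. Indeed, $5$ being prime, the only alternative to period $5$ is period $1$; by the first proposition of this section the unique fixed point of $W$ is $P=(1/5,1/5,1/5,1/5,1/5)$, which does solve (\ref{pe2}). For any other solution $x$ one cannot have $T_\pi^j(x)=x$ with $0<j<5$, since $j$ would then be coprime to $5$ and all coordinates of $x$ would have to coincide, forcing $x=P$; so the five listed iterates are pairwise distinct and the orbit is a genuine $5$-periodic orbit. I do not expect a real obstacle here: all the work has been done by the previous proposition and by Lemma \ref{lT}, and the only points requiring a comment are the special value $p=0$ and the harmless circumstance that $P$ itself satisfies (\ref{pe2}), so that the corollary should be read as ``a $5$-periodic orbit, or the fixed point $P$''.
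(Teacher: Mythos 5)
Your argument is correct and follows essentially the same route as the paper, which states the corollary as an immediate consequence of the preceding proposition together with $T_\pi^5=id$ (so $W^5(x)=T_\pi^5(x)=x$). Your extra remarks — treating $p=0$ separately and observing that the solution $P$ is actually a fixed point rather than a genuine $5$-cycle — are harmless refinements that the paper itself implicitly acknowledges by excluding $P$ in Proposition \ref{3p}.
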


\begin{pro}\label{3p} The following are 5-periodic (different from the fixed point $P$) orbits for the operator (\ref{w1}):
$$ e_1\to e_2\to e_3\to e_4\to e_5\to e_1.$$
$$\left({1\over 3}, {1\over 3}, {1\over 3}, 0, 0\right)\to \left(0, {1\over 3}, {1\over 3}, {1\over 3}, 0\right)\to \left(0, 0, {1\over 3}, {1\over 3}, {1\over 3}\right)\to \left({1\over 3}, 0, 0, {1\over 3}, {1\over 3}\right) $$
$$\to \left({1\over 3},  {1\over 3}, 0, 0, {1\over 3}\right)\to \left( {1\over 3}, {1\over 3},  {1\over 3}, 0, 0\right).$$
\end{pro}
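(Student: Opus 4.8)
The plan is to lean entirely on the machinery already in place. By the corollary preceding this proposition, any solution of the system (\ref{pe2}) generates a $5$-periodic orbit of (\ref{w1}); and by the proposition just before that corollary, if $x$ solves (\ref{pe1}) — equivalently (\ref{pe2}), when $p\ne0$ — then $W^i(x)=T_\pi^i(x)$ for $i=1,2,3,4,5$. Hence it suffices, for each of the two claimed orbits, to exhibit one ``seed'' point of the orbit, check that it satisfies (\ref{pe2}), and then simply read off the remaining four points of the orbit by iterating the permutation $T_\pi$. (For $p=0$ the operator (\ref{w1}) equals $T_\pi$, so everything below is immediate in that case.)

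First I would take $x=e_1=(1,0,0,0,0)$. In the five equations of (\ref{pe2}) the prefactors are $x_5,x_1,x_2,x_3,x_4$ respectively; for $e_1$ the first, third, fourth and fifth prefactors vanish, and in the second equation the bracket is $x_2-x_3+x_4-x_5=0$. So $e_1$ solves (\ref{pe2}) and therefore generates a $5$-periodic orbit; since $T_\pi^i(e_1)=e_{i+1}$ for $i=1,2,3,4$ and $T_\pi^5(e_1)=e_1$, this orbit is exactly $e_1\to e_2\to e_3\to e_4\to e_5\to e_1$.

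Next I would take $u=(1/3,1/3,1/3,0,0)$. Again the first and fifth prefactors $x_5,x_4$ vanish, while in the second, third and fourth equations the brackets are $x_2-x_3+x_4-x_5$, $x_1-x_3+x_4-x_5$ and $x_1-x_2+x_4-x_5$, all equal to $0$ because $x_1=x_2=x_3$ and $x_4=x_5=0$. Thus $u$ solves (\ref{pe2}) and generates a $5$-periodic orbit, and computing $T_\pi(u),T_\pi^2(u),\dots$ reproduces precisely the list $(1/3,1/3,1/3,0,0)\to(0,1/3,1/3,1/3,0)\to(0,0,1/3,1/3,1/3)\to(1/3,0,0,1/3,1/3)\to(1/3,1/3,0,0,1/3)\to(1/3,1/3,1/3,0,0)$ in the statement.

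Finally I would note that these really are $5$-periodic orbits distinct from $P=(1/5,1/5,1/5,1/5,1/5)$: all ten listed points lie on $\partial S^4$ (each has a zero coordinate) while $P\in{\rm int}\,S^4$, so $P$ lies on neither orbit, and within each orbit the five points are pairwise distinct (their zero patterns differ), so the minimal period, being a divisor of $5$, equals $5$. I do not expect any genuine obstacle here: the only non-automatic step is the membership check in (\ref{pe2}), which is a one-line substitution for each of the two seeds, and the remainder is bookkeeping with the $5$-cycle $T_\pi$.
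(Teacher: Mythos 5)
Your proposal is correct and follows essentially the same route as the paper: both rest on the preceding proposition/corollary that solutions of (\ref{pe2}) satisfy $W^i(x)=T_\pi^i(x)$ and hence generate $5$-periodic orbits, so it suffices to check the seeds $e_1$ and $(1/3,1/3,1/3,0,0)$ against (\ref{pe2}) and read off the orbits via $T_\pi$. The only difference is that the paper's proof goes further and classifies \emph{all} solutions of (\ref{pe2}) (interior ones give only $P$, boundary ones give exactly the listed points), a completeness claim not needed for the statement as phrased, while you add the (harmless, and in fact welcome) explicit check that the listed points are pairwise distinct so the minimal period is indeed $5$.
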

\begin{proof} Note that assuming $x_1x_2x_3x_4x_5>0$ one gets from (\ref{pe2}) that $x_1=x_2=x_3=x_4=x_5=1/5$, i.e. the fixed point $P$.
 Now assume $x_1x_2x_3x_4x_5=0$, then considering all possibilities of $x_i=0$
 it is easy to see that all solutions of the system (\ref{pe2}) are mentioned in the proposition.
\end{proof}
Denote by $0(x)$ the number of zero coordinates of $x\in S^4$.
The following lemma follows from the definition of the operator (\ref{w1}).
\begin{lemma} $0(x)=0(W(x))$, for any $x\in S^4$.
\end{lemma}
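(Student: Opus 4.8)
The plan is to prove the sharper statement $x_i'=0\iff x_{\pi(i)}=0$ for every $i$, where $\pi$ is the cyclic shift of the paper (so $\pi(1)=5$ and $\pi(i)=i-1$ for $i\ge 2$); since $\pi$ is a bijection of $\{1,\dots,5\}$, this immediately yields $0(W(x))=0(x)$.

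Reading off (\ref{w1}), write $x_i'=x_{\pi(i)}\,b_i(x)$, so that $b_1=1+p(x_1-x_2+x_3-x_4)$, $b_2=1+p(x_2-x_3+x_4-x_5)$, $b_3=1-p(x_1-x_3+x_4-x_5)$, $b_4=1+p(x_1-x_2+x_4-x_5)$, $b_5=1-p(x_1-x_2+x_3-x_5)$. The implication ``$\Leftarrow$'' is trivial: if $x_{\pi(i)}=0$ then $x_i'=0$. For ``$\Rightarrow$'' it suffices to show $b_i(x)>0$ whenever $x_{\pi(i)}>0$, since then $x_i'=x_{\pi(i)}b_i(x)>0$.

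The decisive structural point is that each bracket $b_i$ involves precisely the four coordinates $x_j$ with $j\ne\pi(i)$, arranged as an alternating sum. Hence, if $x_{\pi(i)}>0$, the sum of the coordinates entering $b_i$ with a ``$+$'' sign and the sum of those entering with a ``$-$'' sign each lie in $[0,\,1-x_{\pi(i)}]\subset[0,1)$, so the alternating sum lies in the open interval $(-1,1)$; combined with $|p|\le 1$ this gives $1\pm p(\cdots)>0$, i.e. $b_i(x)>0$. Verifying this for the five lines — for instance $b_1$ involves $x_1,x_2,x_3,x_4$ and multiplies $x_5$, and $x_5>0$ forces $x_1+x_3<1$ and $x_2+x_4<1$ — is the whole argument, and it is routine.

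Alternatively, one can argue conceptually: $W=T_\pi\circ V$ with $V$ the Volterra QSO whose skew-symmetric matrix has all off-diagonal entries in $\{p,-p\}\subset[-1,1]$; a Volterra QSO keeps every face of the simplex and its relative interior invariant and therefore preserves the number of zero coordinates, while $T_\pi$ only permutes coordinates. I expect no genuine obstacle here; the single thing requiring attention is confirming, line by line, that $x_{\pi(i)}$ does not itself occur in $b_i$ — this is exactly what makes the strict estimate $|p(\cdots)|<1$ valid and hence makes the proof go through.
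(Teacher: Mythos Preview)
Your proof is correct; the paper itself gives no argument beyond the remark that the lemma ``follows from the definition of the operator (\ref{w1})'', and your direct verification that each bracket $b_i(x)$ omits $x_{\pi(i)}$ and is therefore strictly positive whenever $x_{\pi(i)}>0$ is exactly the kind of inspection the paper has in mind. Your alternative route via $W=T_\pi\circ V$ with $V$ Volterra (faces of the simplex being $V$-invariant) is also sound and arguably more conceptual, but it amounts to the same computation.
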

For subset $I\subset \{1,2,3,4,5\}$ denote
$$\Gamma_I=\{x\in S^4 : x_i=0 \ \mbox{for all} \ \ i\in I\}.$$
Note that if number of elements $|I|$ in $I$ is equal to three then $\Gamma_I$ is a one-dimensional
edge connecting two vertices of the simplex.

\subsection{The trajectories on the one-dimensional boundary}
In this subsection consider initial points $x^{(0)}\in \partial S^4$ such that $0(x^{(0)})=3$.
There are 10 possibilities to put 3 zeros in a 5-dimensional vector. Operator (\ref{w1})
permutes zeros according to permutation $T_\pi$. It is easy to see that 10 possibilities
will be separated to two classes (each containing 5 vectors). These two classes considered in the following two cases:

{\bf Case 1:} Let $x^{(0)}=(x_1^{(0)}, 1- x_1^{(0)}, 0, 0, 0)$, we are going to study
trajectory of this point. Under action of operator (\ref{w1}) we get the following sequences
$$\left(x_1^{(0)}, 1- x_1^{(0)}, 0, 0, 0\right)\to \left(0, \underbrace{x_1^{(0)}(1+p(1-x_1^{(0)}))}_{x_2^{(1)}}, 1- x_2^{(1)}, 0, 0\right)$$
$$\to \left(0, 0, \underbrace{x_2^{(1)}(1+p(1-x_2^{(1)}))}_{x_3^{(2)}}, 1- x_3^{(2)}, 0\right)
\to \left(0, 0, 0, \underbrace{x_3^{(2)}(1+p(1-x_3^{(2)}))}_{x_4^{(3)}}, 1- x_4^{(3)}\right)$$
$$\to \left(\underbrace{(1- x_4^{(3)})(1-px_4^{(3)})}_{x_1^{(4)}}, 0, 0, 0, 1-x_1^{(4)}\right)
\to \left(\underbrace{(1- x_1^{(4)})(1+px_1^{(4)})}_{x_1^{(5)}}, 1-x_1^{(5)}, 0, 0, 0\right).$$
From this sequence of vectors we see that $x^{(0)}$ and $x^{(5)}$ have the same form, i.e., on the same edge. If we continue
the iteration of $W$ then we get similar travels, therefore, from the above sequence it follows that between $x_1^{(0)}$ and  $x_1^{(5)}$
we have the following relation:
$$x_1^{(5)}=F(x_1^{(0)}),$$
where $F(x)=h(g(f(f(f(x)))))$ with
\begin{equation}\label{fgh}
f(x)=x(1+p(1-x)), \ \ g(x)=(1-x)(1-px), \ \ h(x)=(1-x)(1+px).
\end{equation}
Thus we showed that $x_1^{(5k)}=F^k(x_1^{(0)})$. Now we study the dynamical system of $F$ on $[0,1]$.

\begin{lemma} The function $F$ has only two fixed points 0 and 1.
\end{lemma}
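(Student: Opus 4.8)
The plan is to analyze the composite map $F = h\circ g\circ f\circ f\circ f$ on $[0,1]$, where $f,g,h$ are the quadratic maps in \eqref{fgh}, and to show that the only solutions of $F(x)=x$ in $[0,1]$ are $x=0$ and $x=1$. First I would record the elementary facts that each of $f$, $g$, $h$ maps $[0,1]$ into $[0,1]$ for $p\in[-1,1]$: indeed $f(0)=0$, $f(1)=1$, and for $x\in(0,1)$ one has $1+p(1-x)>0$ and $f(x)\le x(1+(1-x))=x(2-x)\le 1$; similarly $g$ and $h$ swap the roles of the endpoints, with $g(1)=0$, $g(0)=1$, $h(0)=1$, $h(1)=0$. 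Consequently $x=0$ and $x=1$ are both fixed by the composite $F$ (tracking $0\mapsto 0\mapsto 0\mapsto 0\mapsto 1\mapsto 0$ through $f,f,f,g,h$, and $1\mapsto 1\mapsto 1\mapsto 1\mapsto 0\mapsto 1$), so the content of the lemma is that there are \emph{no interior} fixed points.

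The key mechanism is a monotonicity/contraction-type estimate on the interior $(0,1)$. I would try to show that for $x\in(0,1)$ the "return map" strictly increases the relevant coordinate — more precisely, chase the inequality $f(x)>x$ on $(0,1)$ when the dynamics pushes mass forward. Looking back at Case~1, the iterates satisfy $x_2^{(1)}=f(x_1^{(0)})$, $x_3^{(2)}=f(x_2^{(1)})$, $x_4^{(3)}=f(x_3^{(2)})$, $x_1^{(4)}=g(x_4^{(3)})$, $x_1^{(5)}=h(x_1^{(4)})$. The natural approach is to substitute $u=1-x$ where appropriate so that $g$ and $h$ become, in the $u$-variable, maps of the same shape as $f$: writing $g(x)=(1-x)(1-px)$ and setting $v=1-x$ gives $g = 1-\tilde f$ with $\tilde f(v)=v(1-p(1-v))$... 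I would make this change of variables precise so that the whole five-fold composite $F$ becomes a composition of five maps each of the form $t\mapsto t(1+\epsilon_i p(1-t))$ with signs $\epsilon_i=\pm1$, interleaved with the involution $t\mapsto 1-t$. Then a fixed point $F(x)=x$ with $x\in(0,1)$ would force a nontrivial periodic cycle for this interleaved system, and I would derive a contradiction by the arithmetic–geometric-mean / product argument already used in the proof of the fixed-point Proposition: multiplying the five coordinate relations at a genuine 5-cycle forces a product of five bracketed factors to equal $1$, while AM–GM forces them all equal, which collapses the cycle to the fixed point $P$ — impossible on the edge where one coordinate is forced to be $0$ or $1$.

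Alternatively, and perhaps more cleanly, I would argue directly: suppose $x\in(0,1)$ and $F(x)=x$. Since $f$ strictly increases $x$ when $p>0$ near the relevant range and strictly decreases it when $p<0$, while $g$ and $h$ reverse orientation, one gets that $F$ restricted to $(0,1)$ has no fixed point because $F'$ or the sign of $F(x)-x$ can be controlled: the composite of three copies of $f$ followed by $g$ then $h$ yields $F(x)-x$ with a definite sign on $(0,1)$, or else $F$ is a polynomial of known degree whose only roots of $F(x)-x$ in $[0,1]$ are the endpoints. Concretely, $F(x)-x$ is a polynomial in $x$ (degree $2^5$, but with massive cancellation), divisible by $x(1-x)$; I would factor out $x(x-1)$ and show the remaining factor is strictly of one sign on $[0,1]$, e.g. by exhibiting it as $1+p(\cdots)$ times a manifestly positive quantity, or by a discriminant/graph argument in $p\in[-1,1]$ analogous to the one used for $D(q)$ in Proposition~\ref{pb}.

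The main obstacle is the algebraic bookkeeping: $F$ is a degree-$32$ polynomial, so the "factor out $x(1-x)$ and sign-check the quotient" route risks being computationally heavy unless the change of variables $x\mapsto 1-x$ is exploited to keep everything in the symmetric form $t(1+\epsilon p(1-t))$. I expect the cleanest route is the conceptual one — reduce "$F$ has an interior fixed point" to "the edge-dynamics of $W$ has a 5-periodic point with all five relevant coordinates in $(0,1)$", and then kill that with the AM–GM product identity from the fixed-point proposition, observing that on the edge $\Gamma_I$ with $|I|=3$ at least one of the five coordinates appearing in \eqref{ch} is forced to $0$, so the bracketed factors cannot all equal $1$ unless $x$ is the (non-edge) fixed point $P$. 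That contradiction, together with the endpoint computation, gives exactly the two fixed points $0$ and $1$.
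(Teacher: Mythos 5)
Your endpoint computation ($F(0)=0$, $F(1)=1$) is fine, but the route you finally settle on --- reducing an interior fixed point of $F$ to a $5$-periodic point of $W$ on the edge and then ``killing'' it with the AM--GM product identity \eqref{ch} --- does not work. The identity \eqref{ch} is obtained by multiplying the five coordinate equations of the \emph{fixed-point} system and dividing by $x_1x_2x_3x_4x_5$, so it is available only when all five coordinates are positive and only characterizes fixed points of $W$, not $5$-periodic points. On the edge $\Gamma_I$ with $|I|=3$ three coordinates vanish, so multiplying the coordinate relations along the cycle just gives $0=0$: the Lyapunov product $\varphi(x)=x_1\cdots x_5$ is identically zero there and carries no information. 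Indeed there is no contradiction to be had: $W$ really does possess $5$-periodic orbits on the boundary (the vertex cycle $e_1\to\cdots\to e_5$ and the one-third cycle of Proposition \ref{3p}), so no argument that rules out boundary $5$-cycles wholesale can be correct --- the whole content of the lemma is to determine \emph{which} points of this particular edge are $5$-periodic, and that forces an honest analysis of the one-dimensional return map $F$.

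The salvageable part of your proposal is the remark that $F(x)-x$ has a definite sign on $(0,1)$; this is true (for $p\neq 0$, which the lemma tacitly assumes, since for $p=0$ the map $F$ is the identity), and it would give a proof even shorter than the paper's, but you only assert it and then retreat to an uncarried-out factorization of a degree-$32$ polynomial. To complete that route one can argue: for $p>0$, $f(x)>x$ on $(0,1)$ and $h(g(y))=(1-g(y))(1+pg(y))\geq 1-g(y)>y$ because $g(y)<1-y$, hence $F(x)=h(g(f^3(x)))>x$; for $p<0$ the inequalities reverse, giving $F(x)<x$. The paper instead proves $F(0)=0$, $F(1)=1$, shows $F'>0$ from $f'>0$, $g'<0$, $h'<0$, and shows $F$ is concave for $p>0$ and convex for $p<0$, so its graph meets the diagonal in at most the two endpoints. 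Either of these is a genuinely one-dimensional calculus argument; as written, your proposal neither carries one out nor supplies a working substitute.
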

\begin{proof} It is easy to see that $F(0)=0$ and $F(1)=1$. We show that
there is no any other fixed point. For this it suffices to show that $F$ is monotone increasing, and
for $p>0$ it is concave and for $p<0$ it is convex.
For any $p\in [-1,1]$ we have
$$f'(x)=1+p-2px>0, \ \ g'(x)=-f'(x)<0, \ \ h'(x)=-(1-p+2px)<0.$$
Using these inequalities we get $F'(x)>0$, i.e., $F$ is increasing.
Denoting $\xi(x)=h(g(x))$ we get
$$\xi''(x)=\left\{\begin{array}{ll}
>0, \ \ \mbox{if} \ \ p<0\\[2mm]
<0, \ \ \mbox{if} \ \ p>0
\end{array}\right.$$
Moreover
$$f''(x)=\left\{\begin{array}{ll}
>0, \ \ \mbox{if} \ \ p<0\\[2mm]
<0, \ \ \mbox{if} \ \ p>0
\end{array}\right.$$
Combining these inequalities we get
$$F''(x)=\left\{\begin{array}{ll}
>0, \ \ \mbox{if} \ \ p<0\\[2mm]
<0, \ \ \mbox{if} \ \ p>0
\end{array}\right.$$
Therefore, there is no fixed point in $(0,1)$.
\end{proof}
\begin{pro}\label{ph} For any  $x^{(0)}=(x_1^{(0)}, 1- x_1^{(0)}, 0, 0, 0)$, we have
\begin{itemize}
\item If $p>0$ then
$$\lim_{n\to\infty}W^n(x^{(0)})=\left\{\begin{array}{lllll}
e_1, \ \ \mbox{if} \ \ n=5k\\[2mm]
e_2, \ \ \mbox{if} \ \ n=5k+1\\[2mm]
e_3, \ \ \mbox{if} \ \ n=5k+2\\[2mm]
e_4, \ \ \mbox{if} \ \ n=5k+3\\[2mm]
e_5, \ \ \mbox{if} \ \ n=5k+4
\end{array}\right.$$
\item If $p<0$ then
$$\lim_{n\to\infty}W^n(x^{(0)})=\left\{\begin{array}{lllll}
e_2, \ \ \mbox{if} \ \ n=5k\\[2mm]
e_3, \ \ \mbox{if} \ \ n=5k+1\\[2mm]
e_4, \ \ \mbox{if} \ \ n=5k+2\\[2mm]
e_5, \ \ \mbox{if} \ \ n=5k+3\\[2mm]
e_1, \ \ \mbox{if} \ \ n=5k+4
\end{array}\right.$$
\end{itemize}
\end{pro}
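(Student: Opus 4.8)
The plan is to reduce the whole statement to the one-dimensional dynamics of $F$ on $[0,1]$, which the preceding lemma has already clarified. Set $t_0=x_1^{(0)}$ and $t_k=x_1^{(5k)}$. I would first dispose of the degenerate cases: if $t_0=1$ then $x^{(0)}=e_1$ and if $t_0=0$ then $x^{(0)}=e_2$, and in both cases the trajectory is the $5$-periodic orbit $e_1\to e_2\to e_3\to e_4\to e_5\to e_1$ of Proposition \ref{3p}; so I may assume $0<t_0<1$. Recall that we have already shown $x^{(5k)}=(t_k,1-t_k,0,0,0)$ with $t_k=F^k(t_0)$, where $F=h\circ g\circ f\circ f\circ f$ and $f,g,h$ are as in (\ref{fgh}). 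Hence it suffices to identify $\lim_k t_k$ and then propagate the information around the five residue classes modulo $5$.

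Second, I would pin down $\lim_k t_k$. By the preceding lemma and its proof, $F$ maps $[0,1]$ into itself, is increasing, has exactly the two fixed points $0$ and $1$, and is strictly concave when $p>0$ and strictly convex when $p<0$. Comparing the graph of $F$ with the chord through $(0,F(0))=(0,0)$ and $(1,F(1))=(1,1)$, namely the diagonal, strict concavity gives $F(t)>t$ on $(0,1)$ for $p>0$, and strict convexity gives $F(t)<t$ on $(0,1)$ for $p<0$. Therefore, when $p>0$ the sequence $t_k=F^k(t_0)$ is strictly increasing and bounded above by $1$, so it converges to a fixed point of $F$ lying in $(t_0,1]$, which can only be $1$; when $p<0$ it is strictly decreasing and bounded below by $0$, so it converges to $0$. (That $t_k\in[0,1]$ for all $k$ is automatic since $x^{(5k)}\in S^4$.)

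Third, I would transfer this to $W$ by continuity. If $p>0$, then $t_k\to 1$ gives $x^{(5k)}=(t_k,1-t_k,0,0,0)\to(1,0,0,0,0)=e_1$; since $W$, and hence every iterate $W^i$, is a continuous self-map of $S^4$, for $i=0,1,2,3,4$ we obtain
$$x^{(5k+i)}=W^i\bigl(x^{(5k)}\bigr)\to W^i(e_1)=e_{i+1}\qquad(k\to\infty),$$
using the orbit $e_1\to e_2\to e_3\to e_4\to e_5$; this is exactly the asserted limit for $p>0$. If $p<0$, then $x^{(5k)}\to(0,1,0,0,0)=e_2$, and the same continuity argument gives $x^{(5k+i)}=W^i\bigl(x^{(5k)}\bigr)\to W^i(e_2)$, which equals $e_{2+i}$ for $i=0,1,2,3$ and equals $W(e_5)=e_1$ for $i=4$; this is the asserted limit for $p<0$.

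I do not anticipate a real obstacle here. The only substantive point is the sign of $F(t)-t$ on $(0,1)$, and that is delivered directly by the concavity/convexity already established in the preceding lemma, via the comparison with the diagonal chord; everything afterwards is monotone convergence of a bounded sequence together with continuity of the polynomial operator $W$. The only things to watch are the restriction $0<t_0<1$ (so that one is not secretly examining a periodic point) and the index bookkeeping for $W^i(e_1)$ and $W^i(e_2)$ around the $5$-cycle.
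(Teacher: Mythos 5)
Your proof is correct and follows essentially the paper's own route: reduce to the interval map $F$ via $x^{(5k)}=(F^k(t_0),1-F^k(t_0),0,0,0)$, use the lemma's monotonicity and strict concavity/convexity to force $F^k(t_0)\to 1$ (resp.\ $0$), and then propagate to the residue classes mod $5$. The only differences are cosmetic: you get the direction of convergence from the chord comparison $F(t)\gtrless t$ plus monotone convergence, where the paper instead quotes $F'(0)=(1+p)^5$ and $F'(1)=(1-p)^5$, and your explicit continuity step $x^{(5k+i)}=W^i(x^{(5k)})\to W^i(e_1)$ (or $W^i(e_2)$) spells out what the paper leaves implicit.
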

\begin{proof} The limits follow from the limit of $x_1^{(5k)}$ which by concavity (resp. convexity) of $F$ and the type of fixed points
have 1 (resp. 0) as a limit point, because $F'(0)=(1+p)^5$ and $F'(1)=(1-p)^5$.
\end{proof}

{\bf Case 2:} Let $x^{(0)}=(x_1^{(0)}, 0, 1- x_1^{(0)}, 0, 0)$,then under action of operator (\ref{w1})
we get the following sequences
$$\left(x_1^{(0)}, 0, 1- x_1^{(0)}, 0, 0\right)\to \left(0, \underbrace{x_1^{(0)}(1-p(1-x_1^{(0)}))}_{x_2^{(1)}}, 0, 1- x_2^{(1)}, 0\right)$$
$$\to \left(0, 0, \underbrace{x_2^{(1)}(1-p(1-x_2^{(1)}))}_{x_3^{(2)}}, 0, 1- x_3^{(2)}\right)
\to \left(\underbrace{(1-x_3^{(2)})(1+px_3^{(2)})}_{x_1^{(3)}}, 0, 0, 1- x_1^{(3)}, 0\right)$$
$$\to \left(0, \underbrace{x_1^{(3)}(1+p(1-x_1^{(3)}))}_{x_2^{(4)}}, 0, 0, 1-x_2^{(4)}\right)
\to \left(\underbrace{(1- x_2^{(4)})(1-px_2^{(4)})}_{x_1^{(5)}}, 0, 1-x_1^{(5)}, 0, 0\right).$$
Thus $x^{(0)}$ and $x^{(5)}$ are on the same edge, consequently between $x_1^{(0)}$ and  $x_1^{(5)}$
we have the following relation:
$$x_1^{(5)}=G(x_1^{(0)}),$$
where $G(x)=g(f(h(\alpha(\alpha(x)))))$ with $\alpha(x)=x(1-p(1-x))$ and other functions defined in (\ref{fgh}).
We note that $G$ on $[0,1]$ has similar properties as $F$ mentioned in Case 1.
In this case we have $G'(0)=(1-p)^5$ and $G'(1)=(1+p)^5$.
Thus we get
\begin{pro} For any  $x^{(0)}=(x_1^{(0)}, 0, 1- x_1^{(0)}, 0, 0)$, we have
\begin{itemize}
\item If $p>0$ then
$$\lim_{n\to\infty}W^n(x^{(0)})=\left\{\begin{array}{lllll}
e_3, \ \ \mbox{if} \ \ n=5k\\[2mm]
e_4, \ \ \mbox{if} \ \ n=5k+1\\[2mm]
e_5, \ \ \mbox{if} \ \ n=5k+2\\[2mm]
e_1, \ \ \mbox{if} \ \ n=5k+3\\[2mm]
e_2, \ \ \mbox{if} \ \ n=5k+4
\end{array}\right.$$
\item If $p<0$ then
$$\lim_{n\to\infty}W^n(x^{(0)})=\left\{\begin{array}{lllll}
e_1, \ \ \mbox{if} \ \ n=5k\\[2mm]
e_2, \ \ \mbox{if} \ \ n=5k+1\\[2mm]
e_3, \ \ \mbox{if} \ \ n=5k+2\\[2mm]
e_4, \ \ \mbox{if} \ \ n=5k+3\\[2mm]
e_5, \ \ \mbox{if} \ \ n=5k+4
\end{array}\right.$$
\end{itemize}
\end{pro}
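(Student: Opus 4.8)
The plan is to follow the template of Case~1 and Proposition~\ref{ph}. By the five-step computation displayed just above, $W^{5}$ maps the edge $\{(t,0,1-t,0,0):t\in[0,1]\}$ into itself and transforms the first coordinate by $x_1^{(5)}=G(x_1^{(0)})$, where $G=g\circ f\circ h\circ\alpha\circ\alpha$ with $\alpha(x)=x(1-p(1-x))$ and $f,g,h$ as in (\ref{fgh}); iterating gives $x_1^{(5k)}=G^{k}(x_1^{(0)})$, so the whole problem reduces to the one-dimensional dynamical system generated by $G$ on $[0,1]$.

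The substantive step is to prove, for $G$, the analogue of the lemma established for $F$: that $G$ has exactly the two fixed points $0$ and $1$, is strictly increasing on $(0,1)$, and has a definite sign of its second derivative on $[0,1]$, namely $G''>0$ for $p>0$ and $G''<0$ for $p<0$. Monotonicity comes from the chain rule applied to $G=g\circ f\circ h\circ\alpha\circ\alpha$ together with the signs $f'>0$, $\alpha'\ge 0$, $g'<0$, $h'<0$ already recorded in Case~1, since the product of the two factors coming from $g'$ and $h'$ with the three factors coming from $f'$ and the two copies of $\alpha'$ is positive. For the convexity/concavity I would compute $\alpha''=2p$ and combine it with $f''=-2p$, $g''=2p$, $h''=-2p$ exactly as was done for $F$: group the composition as $G=g\circ\bigl(f\circ h\circ(\alpha\circ\alpha)\bigr)$ and propagate the sign of the second derivative outward through each composition by means of $(\varphi\circ\psi)''=\varphi''(\psi)\,(\psi')^{2}+\varphi'(\psi)\,\psi''$, checking at every stage that the two summands carry the same sign. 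Once a fixed sign of $G''$ is in hand, strict convexity (resp. concavity) together with $G(0)=0$ and $G(1)=1$ rules out a fixed point in $(0,1)$. This is where I expect the real work to lie: $G$ is a polynomial of degree up to $2^{5}=32$, so the sign of $G''$ is invisible term by term, and the compositional argument only goes through because the order of the five factors in $g\circ f\circ h\circ\alpha\circ\alpha$ is such that the two terms of $(\varphi\circ\psi)''$ never disagree in sign; a different parenthesisation would mix the signs.

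Finally I would use the endpoint multipliers $G'(0)=(1-p)^{5}$ and $G'(1)=(1+p)^{5}$ (a short chain-rule evaluation, using that $f$ and $\alpha$ fix $0$ and $1$ while $g$ and $h$ interchange them). For $p>0$ these give $G'(0)<1<G'(1)$, so $G$ is convex with $0$ attracting and $1$ repelling, whence $G^{k}(t)\to 0$ for every $t\in[0,1)$; therefore $x^{(5k)}\to(0,0,1,0,0)=e_{3}$ (if instead $x_1^{(0)}=1$ then $x^{(0)}=e_{1}$ and the trajectory is the $5$-periodic orbit through $e_{1}$), and applying $W,W^{2},W^{3},W^{4}$ and using $e_{3}\to e_{4}\to e_{5}\to e_{1}\to e_{2}$ produces the four remaining limits. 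For $p<0$ the inequalities reverse, $G$ is concave with $1$ attracting, $G^{k}(t)\to 1$ for $t\in(0,1]$, so $x^{(5k)}\to e_{1}$, and the companion limits follow in the same way. This establishes both cases of the proposition.
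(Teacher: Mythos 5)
Your argument is correct and is essentially the paper's own proof: the same five-step reduction to the one-dimensional map $G=g\circ f\circ h\circ\alpha\circ\alpha$ on the edge, the same endpoint multipliers $G'(0)=(1-p)^5$ and $G'(1)=(1+p)^5$, and the same monotonicity/convexity analysis carried out for $F$ in Case~1, which the paper compresses into the remark that $G$ ``has similar properties as $F$.'' Your sign bookkeeping ($G''>0$ for $p>0$ and $G''<0$ for $p<0$, i.e.\ opposite to $F$, forcing $G(x)<x$ resp.\ $G(x)>x$ on $(0,1)$) is right and simply supplies the detail the paper leaves implicit, including the endpoint caveat $x_1^{(0)}\in\{0,1\}$ where the trajectory is one of the vertex $5$-periodic orbits.
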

As a corollary of these propositions we have
\begin{thm}\label{tt1} For any initial point $x^{(0)}$ in one-dimensional boundary of $S^4$ (i.e. $0(x^{(0)})=3$)
the set of limit points of its trajectory (under operator (\ref{w1})) is $\omega(x^{(0)})=\{e_1,e_2,e_3,e_4,e_5\}$.
\end{thm}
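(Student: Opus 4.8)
The plan is to reduce an arbitrary one-dimensional boundary point to one of the two reference edges treated in Case~1 and Case~2 above, by means of the $T_\pi$-symmetry recorded in Lemma~\ref{lT}, and then to invoke the two preceding propositions.

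First I would set up the combinatorics of the ten edges. A point with $0(x^{(0)})=3$ has exactly two positive coordinates, indexed by a two-element set $J\subset\{1,2,3,4,5\}$, so it lies in the relative interior of the edge $\Gamma_{\{1,2,3,4,5\}\setminus J}$. Since $\pi$ is a $5$-cycle, the group $\langle T_\pi\rangle\cong\Z/5$ acts on the ten edges $\{\Gamma_I:|I|=3\}$, and this action is free: a two-element subset $\{a,b\}$ of $\Z/5$ fixed by a nontrivial rotation would satisfy $\{a,b\}=\{a+c,b+c\}$ with $c\ne 0$, forcing $2(a-b)\equiv 0\pmod{5}$, hence $a=b$. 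A free $\Z/5$-action on a set of size ten has exactly two orbits, each of size five; a quick check shows that the orbit of $J=\{1,2\}$ consists of the consecutive pairs and the orbit of $J=\{1,3\}$ of the gap pairs, so the Case~1 and Case~2 reference edges lie in different orbits and together meet every edge. Consequently, for our $x^{(0)}$ there is $j\in\{0,1,2,3,4\}$ with $y:=T_\pi^{-j}(x^{(0)})$ lying on one of the two reference edges; moreover $0(y)=0(x^{(0)})=3$, so $y$ is an interior point of that edge, which is precisely the situation covered by Proposition~\ref{ph} (respectively its Case~2 analogue).

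Next I would transport the conclusion back along $T_\pi$. By Lemma~\ref{lT} one has $W^n=T_\pi^i\circ V^n=V^n\circ T_\pi^i$ for $n=5k+i$, and hence $W^n\circ T_\pi=T_\pi\circ W^n$ for every $n$; therefore $W^n(x^{(0)})=W^n\bigl(T_\pi^{j}(y)\bigr)=T_\pi^{j}\bigl(W^n(y)\bigr)$ for all $n$. Since $T_\pi$ is a homeomorphism of the compact simplex $S^4$, passing to the set of limit points yields $\omega(x^{(0)})=T_\pi^{j}\bigl(\omega(y)\bigr)$. By Proposition~\ref{ph} (resp. its Case~2 analogue), $\omega(y)=\{e_1,e_2,e_3,e_4,e_5\}$; and $T_\pi$ permutes the vertices $e_1,\dots,e_5$ cyclically, so $T_\pi^{j}(\{e_1,\dots,e_5\})=\{e_1,\dots,e_5\}$. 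This gives $\omega(x^{(0)})=\{e_1,e_2,e_3,e_4,e_5\}$, as required.

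The only step that is not purely mechanical is the orbit count showing that the two reference edges exhaust all ten edges up to the $T_\pi$-action; everything else is a direct appeal to results already proved. I would also stress that the hypothesis $0(x^{(0)})=3$ (not merely $\ge 3$) is essential: it keeps $x^{(0)}$, and hence $y$, in the \emph{open} edge and excludes the vertices $e_i$, for which the convergence statements of the two propositions would fail.
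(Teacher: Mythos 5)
Your argument is correct and is essentially the paper's own: the paper derives Theorem \ref{tt1} directly as a corollary of Proposition \ref{ph} and its Case~2 analogue, after noting (without detail) that the ten edges split into two classes under the action of $T_\pi$. You merely make explicit what the paper leaves implicit --- the two-orbit count for the $\Z/5$-action on edges and the transport of $\omega$-limit sets via the commutation $W\circ T_\pi=T_\pi\circ W$ from Lemma \ref{lT} --- so no substantive difference or gap.
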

\subsection{The trajectories on the two-dimensional boundary}
Consider the case $0(x^{(0)})=2$. This case also can be divided to two classes:

{\it Case 1}. Let $x^{(0)}=(x_1^{(0)}, x_2^{(0)}, 1- x_1^{(0)}-x_2^{(0)}, 0, 0)$, then under action of operator (\ref{w1})
we get the following sequences
$$\left(x_1^{(0)}, x_2^{(0)}, 1- x_1^{(0)}-x_2^{(0)}, 0, 0)\right)$$
$$\to \left(0, \underbrace{x_1^{(0)}[1-p(1-x_1^{(0)}-2x_2^{(0)})]}_{x_2^{(1)}}, \, \underbrace{x_2^{(0)}[1+p(1-2x_1^{(0)}-x_2^{(0)})]}_{x_3^{(1)}}, \,
1- x_2^{(1)}-x_3^{(1)}, 0\right)$$
$$\to \left(0, 0, \underbrace{x_2^{(1)}[1-p(1-x_2^{(1)}-2x_3^{(1)})]}_{x_3^{(2)}}, \, \underbrace{x_3^{(1)}[1+p(1-2x_2^{(1)}-x_3^{(1)})]}_{x_4^{(2)}}, \,
1- x_3^{(2)}-x_4^{(2)}\right)$$
$$\to \left(\underbrace{(1-x_3^{(2)}-x_4^{(2)})[1+p(x_3^{(2)}-x_4^{(2)})]}_{x_1^{(3)}}, 0, 0,
\underbrace{x_3^{(2)}[1-p(1-x_3^{(2)}-2x_4^{(2)})]}_{x_4^{(3)}}, \, 1- x_1^{(3)}-x_4^{(3)}\right)$$
$$\to \left(\underbrace{(1-x_1^{(3)}-x_4^{(3)})[1+p(x_1^{(3)}-x_4^{(3)})]}_{x_1^{(4)}}, \,
 \underbrace{x_1^{(3)}[1-p(1-x_1^{(3)}-2x_4^{(3)})]}_{x_2^{(4)}}, 0, 0, \, 1- x_1^{(4)}-x_2^{(4)}\right)$$
$$\to \left(\underbrace{(1-x_1^{(4)}-x_2^{(4)})[1+p(x_1^{(4)}-x_2^{(4)})]}_{x_1^{(5)}}, \,
 \underbrace{x_1^{(4)}[1-p(1-x_1^{(4)}-2x_2^{(4)})]}_{x_2^{(5)}},  \, 1- x_1^{(5)}-x_2^{(5)}, 0, 0,\right)$$

Thus between {\it non-zero} coordinates (i.e. first three coordinates) of the vectors $x^{(0)}$ and  $x^{(5)}$
we have the following relation:
\begin{equation}\label{AB}
\hat x^{(5)}=B(B(B(A(A(\hat x^{(0)}))))),
\end{equation}
where $\hat x=(x_1,x_2,x_3)$, and
$A, B: S^2\to S^2$ are defined as the following
$$A: \left\{\begin{array}{lll}
x'=x[1+p(y-z)]\\[2mm]
y'=y[1+p(z-x)]\\[2mm]
z'=z[1+p(x-y)]
\end{array}\right.$$
$$B: \left\{\begin{array}{lll}
x'=z[1+p(x-y)]\\[2mm]
y'=x[1+p(y-z)]\\[2mm]
z'=y[1+p(z-x)]
\end{array}\right.$$
Note that $A$ is a Volterra operator and $B$ is its permuted one.
Consider permutation $\tau=\left(\begin{array}{ll}
1 \ \ 2 \ \ 3\\[2mm]
3 \ \ 1 \ \ 2
\end{array}\right)$. It is easy to check that
$$B=A(T_\tau)=T_\tau(A),$$
$$B^{(n)}=\left\{\begin{array}{lll}
A^{(n)}, \ \ \mbox{if} \ \ n=3k, \\[2mm]
T_\tau(A^{(n)}), \ \ \mbox{if} \ \ n=3k+1\\[2mm]
T_{\tau^2}(A^{(n)}), \ \ \mbox{if} \ \ n=3k+2,
\end{array}\right.$$
where $\tau^2=\tau^{-1}=\left(\begin{array}{ll}
1 \ \ 2 \ \ 3\\[2mm]
2 \ \ 3 \ \ 1
\end{array}\right)$.
Therefore from (\ref{AB}) we get
$$\hat x^{(5)}=A^{(5)}(\hat x^{(0)}).$$
The following result is known for the operator $A$ (see \cite{G}):
\begin{itemize}
\item The fixed points are $\hat e_1=(1, 0, 0), \hat e_2=(0, 1, 0), \hat e_3=(0, 0, 1), \hat P=(1/3, 1/3, 1/3)$.
The points $\hat e_i$ are saddle, the point $\hat P$ is repeller independently on $p\ne 0$.

\item For any $x\in {\rm int} S^2$, $x\ne \hat P$, the set $\omega_A(x)$ of limit points of the trajectory
of the point $x$ is infinite (i.e. the limit does not exist) and lies on the boundary of the simplex.
\end{itemize}
Therefore $\hat x^{(5k)}$ has no a limit point, but only one of its coordinate goes to 0. Indeed if two coordinates
go to zero then remaining coordinate has limit 1, which is contradiction to non-existence of the limit.
\begin{thm}\label{tt2} For any initial point $x^{(0)}=(x_1^{(0)}, x_2^{(0)}, 1- x_1^{(0)}-x_2^{(0)}, 0, 0)$, with $\hat x^{(0)}\ne \hat P$ in two-dimensional
boundary of $S^4$ the set of limit points of its trajectory (under operator (\ref{w1})) is infinite and lies
on the one-dimensional boundary.
\end{thm}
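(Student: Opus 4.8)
The plan is to reduce the five‑dimensional dynamics to the well‑understood three‑dimensional Volterra dynamics of $A$ and then lift the conclusion back to $S^4$. By the lemma on the number of zero coordinates, $0(W^n(x^{(0)}))=0(x^{(0)})=2$ for every $n$, so the whole trajectory stays on the union of the two‑dimensional faces of $S^4$, and the displayed iteration shows that the support of $x^{(n)}$ cycles with period $5$; in particular $x^{(5k)}$ lies on the face $\{x_4=x_5=0\}$ for all $k$. Writing such a point as $(\hat x,0,0)$ with $\hat x=(x_1,x_2,x_3)\in S^2$, relation (\ref{AB}) iterated gives $\hat x^{(5k)}=A^{5k}(\hat x^{(0)})$. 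Since $x^{(0)}$ has exactly two zero coordinates, $\hat x^{(0)}\in{\rm int}\,S^2$, and by hypothesis $\hat x^{(0)}\ne\hat P$; hence the quoted result for the Volterra operator $A$ applies: $\omega_A(\hat x^{(0)})$ is infinite and contained in $\partial S^2$.

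Next I would upgrade this from $\omega_A$ to $\omega_{A^5}$. For any continuous self‑map $f$ of a compact space one has $\omega_f(x)=\bigcup_{i=0}^{n-1}f^i\bigl(\omega_{f^n}(x)\bigr)$ and $\omega_{f^n}(x)\subseteq\omega_f(x)$, so $\omega_{f^n}(x)$ is finite if and only if $\omega_f(x)$ is; hence $\omega_{A^5}(\hat x^{(0)})$ is infinite, and it is contained in $\omega_A(\hat x^{(0)})\subseteq\partial S^2$. Consequently the set of limit points of the subsequence $\{x^{(5k)}\}_k$ is $\{(\hat y,0,0):\hat y\in\omega_{A^5}(\hat x^{(0)})\}$, which is infinite and each of whose points has at least three zero coordinates — the fourth and fifth, together with one of the first three, since $\hat y\in\partial S^2$ — i.e.\ it lies on the one‑dimensional boundary of $S^4$. (In particular no two of the first three coordinates of $x^{(5k)}$ can tend to $0$ simultaneously, for then the third would tend to $1$ and $\omega_{A^5}(\hat x^{(0)})$ would be a single point, which is the informal remark made before the theorem.)

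Finally I would assemble the full $\omega$‑limit set: applying the same general identity to $W$ gives $\omega(x^{(0)})=\bigcup_{i=0}^{4}W^i\bigl(\omega_{W^5}(x^{(0)})\bigr)$, where $\omega_{W^5}(x^{(0)})$ is exactly the infinite subset of the one‑dimensional boundary found above. Each $W^i$ is continuous and, by the zero‑coordinate lemma, carries a point with at least three zero coordinates to another such point; hence every $W^i\bigl(\omega_{W^5}(x^{(0)})\bigr)$ lies on the one‑dimensional boundary of $S^4$, and the finite union $\omega(x^{(0)})$ is still infinite. This gives the theorem (understood for $p\ne0$, since for $p=0$ the operator is $T_\pi$ and the orbit is $5$‑periodic). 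The one step that requires care is the passage from the cited statement about the whole $A$‑trajectory to the sparse subsequence $\hat x^{(5k)}=A^{5k}(\hat x^{(0)})$, i.e.\ replacing $\omega_A$ by $\omega_{A^5}$; everything else is bookkeeping with the zero‑coordinate lemma and the conjugacy (\ref{AB}).
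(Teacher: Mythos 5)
Your argument is correct and follows essentially the paper's own route: reduce via (\ref{AB}) and the commutation $B=T_\tau\circ A=A\circ T_\tau$ to $\hat x^{(5k)}=A^{5k}(\hat x^{(0)})$, then invoke the known result that for $p\ne 0$ the Volterra operator $A$ has an infinite $\omega$-limit set on $\partial S^2$ for interior points other than $\hat P$. The only difference is that you make explicit two steps the paper leaves implicit --- passing from $\omega_A$ to $\omega_{A^5}$ via the identity $\omega_f(x)=\bigcup_{i=0}^{n-1}f^i\bigl(\omega_{f^n}(x)\bigr)$, and reassembling $\omega_W(x^{(0)})$ from $\omega_{W^5}(x^{(0)})$ using the zero-coordinate lemma --- which is a tightening of the same proof rather than a different one.
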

\begin{proof} Follows from the above mentioned properties of $\hat x^{(5k)}$.
\end{proof}
{\it Case 2}. Let $x^{(0)}=(x_1^{(0)}, x_2^{(0)}, 0, 1- x_1^{(0)}-x_2^{(0)}, 0)$, then under action of operator (\ref{w1}),
similarly to the previous case we get (see Lemma \ref{lT}) that $\hat x^{(5k)}=\hat V^{(5k)} (\hat x^{(0)})$,
where $\hat V$ is the following Volterra
operator
$$\hat V: \left\{\begin{array}{lll}
x'=x[1+p(y+z)]\\[2mm]
y'=y[1-p(x+z)]\\[2mm]
z'=z[1-p(x-y)]
\end{array}\right.$$
Note that this Volterra operator has only three fixed points $(1,0,0), (0,1,0), (0,0,1)$.
If $p>0$ (resp. $p<0$) then $x^{(n)}$ (resp. $y^{(n)}$) increases and  $y^{(n)}$ (resp. $x^{(n)}$) decreases.
Therefore these two coordinates have limit, consequently $z^{(n)}=1-x^{(n)}-y^{(n)}$ has also a limit.
The limit point is a fixed point of $\hat V$, and thus we get
$$\lim_{n\to\infty}\hat x^{(n)}=\left\{\begin{array}{ll}
(1,0,0), \ \ \mbox{if} \ \ p>0\\[2mm]
(0,1,0), \ \ \mbox{if} \ \ p<0
\end{array}\right.$$
Thus the following theorem is true
\begin{thm}\label{tt3} For any initial point $x^{(0)}=(x_1^{(0)}, x_2^{(0)}, 0, 1- x_1^{(0)}-x_2^{(0)}, 0)$,
with $\hat x^{(0)}\ne \hat P$ in two-dimensional
boundary of $S^4$ we have
\begin{itemize}
\item If $p>0$ then
$$\lim_{n\to\infty}W^n(x^{(0)})=\left\{\begin{array}{lllll}
e_1, \ \ \mbox{if} \ \ n=5k\\[2mm]
e_2, \ \ \mbox{if} \ \ n=5k+1\\[2mm]
e_3, \ \ \mbox{if} \ \ n=5k+2\\[2mm]
e_4, \ \ \mbox{if} \ \ n=5k+3\\[2mm]
e_5, \ \ \mbox{if} \ \ n=5k+4
\end{array}\right.$$
\item If $p<0$ then
$$\lim_{n\to\infty}W^n(x^{(0)})=\left\{\begin{array}{lllll}
e_2, \ \ \mbox{if} \ \ n=5k\\[2mm]
e_3, \ \ \mbox{if} \ \ n=5k+1\\[2mm]
e_4, \ \ \mbox{if} \ \ n=5k+2\\[2mm]
e_5, \ \ \mbox{if} \ \ n=5k+3\\[2mm]
e_1, \ \ \mbox{if} \ \ n=5k+4
\end{array}\right.$$
\end{itemize}
\end{thm}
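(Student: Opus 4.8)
The plan is to push the whole problem down onto the planar Volterra operator $\hat V$ introduced just above the statement, and then run a one‑dimensional monotonicity argument. First I would record the structural reduction. A point $(x_1,x_2,0,x_4,0)\in S^4$ has exactly two zero coordinates, and by the lemma $0(x)=0(W(x))$ together with the explicit form of (\ref{w1}), five successive applications of $W$ cyclically permute the three nonzero positions back to $\{1,2,4\}$; tracking the nonzero block $\hat x=(x_1,x_2,x_4)$ through these five steps and invoking Lemma~\ref{lT} (exactly as in Case~1 of the two-dimensional boundary analysis) yields $\hat x^{(5k)}=\hat V^{5k}(\hat x^{(0)})$. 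Since $0(x^{(0)})=2$, the block $\hat x^{(0)}$ lies in ${\rm int}\,S^2$, so $x_1^{(0)}$, $x_2^{(0)}$ and $1-x_1^{(0)}-x_2^{(0)}$ are all strictly positive.

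Next I would use the constraint $x+y+z=1$ to rewrite the first two components of $\hat V$ as the one-dimensional maps $x\mapsto x\bigl(1+p(1-x)\bigr)$ and $y\mapsto y\bigl(1-p(1-y)\bigr)$. For $p>0$ one reads off $x^{(n+1)}-x^{(n)}=p\,x^{(n)}(1-x^{(n)})>0$ and $1-x^{(n+1)}=(1-x^{(n)})(1-p\,x^{(n)})>0$, so $x^{(n)}$ is increasing and stays in $(0,1)$, while $y^{(n+1)}=y^{(n)}\bigl(1-p(1-y^{(n)})\bigr)$ is positive and decreasing; for $p<0$ the roles of $x$ and $y$ are swapped. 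In every case both monotone bounded sequences converge, hence $z^{(n)}=1-x^{(n)}-y^{(n)}$ converges, so $\hat x^{(n)}$ converges to a point which, by continuity of $\hat V$, must be a fixed point of $\hat V$. The only fixed points of $\hat V$ are the vertices $(1,0,0)$, $(0,1,0)$, $(0,0,1)$; but the coordinate that is increasing starts from a strictly positive value, hence stays bounded away from $0$, which forces the limit to be $(1,0,0)$ for $p>0$ and $(0,1,0)$ for $p<0$. Reinterpreting in $S^4$ this says $W^{5k}(x^{(0)})\to e_1$ when $p>0$ and $W^{5k}(x^{(0)})\to e_2$ when $p<0$.

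Finally I would extract the full list of subsequential limits by writing $n=5k+i$ with $i\in\{0,1,2,3,4\}$ and using $W^n(x^{(0)})=W^i\bigl(W^{5k}(x^{(0)})\bigr)$. Since $W^i$ is continuous and $e_1\to e_2\to e_3\to e_4\to e_5\to e_1$ is a $5$-periodic orbit (Proposition~\ref{3p}), for $p>0$ we get $\lim_{k\to\infty}W^{5k+i}(x^{(0)})=W^i(e_1)$, which is $e_1,e_2,e_3,e_4,e_5$ for $i=0,1,2,3,4$; for $p<0$ we get $\lim_{k\to\infty}W^{5k+i}(x^{(0)})=W^i(e_2)$, which is $e_2,e_3,e_4,e_5,e_1$. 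This is precisely the asserted table.

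The only steps with any subtlety are at the very beginning: verifying that the five-step iterate of $W$ really does collapse to $\hat V^{5}$ on the nonzero coordinates (the bookkeeping is slightly more intricate than in Case~1 because the three surviving positions are not consecutive), and using the fact that $\hat V$ has no fixed point other than the three vertices. Everything after that is monotone convergence plus continuity, so I do not anticipate a genuine obstacle there.
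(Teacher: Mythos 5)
Your proposal is correct and follows essentially the same route as the paper: reduce via Lemma \ref{lT} to the three-dimensional Volterra operator $\hat V$ on the nonzero block, use monotonicity of the first two coordinates (for $p>0$ the first increases, the second decreases, and conversely for $p<0$) to get convergence to a vertex fixed point, and then distribute the limit over the residues $n=5k+i$ using the orbit $e_1\to e_2\to e_3\to e_4\to e_5$. You merely spell out two details the paper leaves implicit — why the limit vertex must be $(1,0,0)$ (resp.\ $(0,1,0)$) and the continuity step $W^{5k+i}(x^{(0)})\to W^i(e_1)$ — which is fine.
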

\subsection{Three dimensional boundary}
Take an initial point of the form $x^{(0)}=(x^{(0)}_1, x^{(0)}_2, x^{(0)}_3, 1-x^{(0)}_1-x^{(0)}_2-x^{(0)}_3, 0)$. It is clear that
under action of $W$, zero will 'travel' all coordinates of the trajectory vectors.
  By Lemma \ref{lT} we get $x^{(5k)}=\tilde V^{(5k)}(x^{(0)})$, where $\tilde V: S^3\to S^3$ is defined as
$$\tilde V: \left\{\begin{array}{llll}
x'=x[1+p(y-z+t)]\\[2mm]
y'=y[1-p(x-z+t)]\\[2mm]
z'=z[1+p(x-y+t)]\\[2mm]
t'=t[1-p(x-y+z)]
\end{array}\right.$$

It is clear that any face of the simplex $S^4$ is invariant with respect to $\tilde V$.
But here we will not consider the faces because in this subsection we assumed that boundary point has four non-zero coordinates.

\begin{pro}\label{pp} The operator $\tilde V$ has the following properties:
\begin{itemize}
\item[1.] The fixed points are
 $$\tilde e_i=(\delta_{1i}, \delta_{2i}, \delta_{3i}, \delta_{4i}), i=1,2,3,4; \ \ \
M=({1\over 3}, {1\over 3}, {1\over 3}, 0), N=(0, {1\over 3}, {1\over 3}, {1\over 3}).$$
\item[2.] The interval $MN=\{({1-u\over 3}, {1\over 3}, {1\over 3}, {u\over 3}): u\in [0,1]\}$ is invariant with respect to $\tilde V$.
\item[3.] If $\tilde x^{(0)}\in MN$ then
$$\lim_{n\to \infty}\tilde V^n(\tilde x^0)=\left\{\begin{array}{ll}
M, \ \ \mbox{if} \ \ p>0\\[2mm]
N, \ \ \mbox{if} \ \ p<0
\end{array}\right.$$
\item[4.] If $\tilde x^{(0)}\notin MN$ (with four non-zero coordinates)
then $\tilde V^n(\tilde x^0)$ does not converge and its
limit points set is infinite and lies on the boundary of $S^3$.
\end{itemize}
\end{pro}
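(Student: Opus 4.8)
The plan is to treat the four parts in turn: parts~1--3 by direct (elementary) computation, and part~4 by exhibiting a semiconjugacy of $\tilde V$ onto the $3$-cycle Volterra operator $A$ already analysed above. Throughout we take $p\ne 0$ (for $p=0$ the operator $\tilde V$ is the identity). For part~1 I would read the fixed point system of $\tilde V$ coordinatewise: since $p\ne0$, the $i$-th equation forces either $x_i=0$ or the corresponding linear form to vanish (for $\tilde V$ these forms are $y-z+t$, $x-z+t$, $x-y+t$, $x-y+z$), and then one runs through the cases according to which coordinates are zero. If $x,y,z,t>0$ all four forms vanish; subtracting the second from the first gives $x=y$, whence $x-y+t=0$ forces $t=0$, a contradiction. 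If exactly one coordinate vanishes, $x=0$ forces $z=t$, $y=t$, $y=z$ (hence $N$), $t=0$ forces $x=y=z$ (hence $M$), while $y=0$ makes the $z$-equation's form read $x+t=0$, impossible, and $z=0$ is impossible for the same reason. If two or more coordinates vanish a direct check leaves only the vertices $\tilde e_i$. This is pure bookkeeping.

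For parts~2 and~3, substitute $\tilde x=\left(\tfrac{1-u}{3},\tfrac13,\tfrac13,\tfrac u3\right)$ into $\tilde V$: the forms $x-z+t$ and $x-y+t$ vanish on $MN$, so $y'=z'=\tfrac13$, while $x'=\tfrac{1-u}{3}\left(1+\tfrac{pu}{3}\right)$ and $t'=\tfrac u3\left(1-\tfrac{p(1-u)}{3}\right)$; since $1-u'=(1-u)\left(1+\tfrac{pu}{3}\right)$ with $u'=\phi(u):=u\left(1-\tfrac p3(1-u)\right)$, the image is again of the same form, so $MN$ is invariant and carries the one-dimensional dynamics $\phi$. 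As $\phi(u)-u=-\tfrac p3u(1-u)$, the only fixed points of $\phi$ on $[0,1]$ are $0\,(=M)$ and $1\,(=N)$. For $p\in(0,1]$ and $u\in(0,1)$ one has $0<1-\tfrac p3(1-u)<1$, so $0<\phi(u)<u$; the orbit is positive and strictly decreasing, hence tends to $0$, i.e.\ $\tilde V^n(\tilde x^{(0)})\to M$. For $p\in[-1,0)$, with $q=-p$, $\phi(u)=u\left(1+\tfrac q3(1-u)\right)$ and $1-\phi(u)=(1-u)\left(1-\tfrac q3 u\right)>0$, so $u<\phi(u)<1$, the orbit increases to $1$, i.e.\ $\tilde V^n(\tilde x^{(0)})\to N$.

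For part~4, the key is the projection $\Pi:S^3\to S^2$, $\Pi(x,y,z,t)=(y,\,z,\,x+t)=:(u_1,u_2,u_3)$. Using $x+y+z+t=1$ one computes $u_1'=u_1[1+p(u_2-u_3)]$, $u_2'=u_2[1+p(u_3-u_1)]$, and (the one identity that must be checked) $u_3'=x'+t'=(x+t)[1+p(y-z)]=u_3[1+p(u_1-u_2)]$, the cross terms $xt-tx$ cancelling. Hence $\Pi\circ\tilde V=A\circ\Pi$, with $A$ exactly the $3$-cycle Volterra operator on $S^2$ recalled above (interior fixed point $\hat P=(\tfrac13,\tfrac13,\tfrac13)$, vertices saddle, and $\omega_A(w)$ infinite in $\partial S^2$ for every $w\in{\rm int}\,S^2$, $w\ne\hat P$); moreover $\Pi(\tilde x)=\hat P$ iff $y=z=x+t=\tfrac13$, i.e.\ iff $\tilde x\in MN$. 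So if $\tilde x^{(0)}$ has four non-zero coordinates and $\tilde x^{(0)}\notin MN$, then $\Pi(\tilde x^{(0)})\in{\rm int}\,S^2$ and $\Pi(\tilde x^{(0)})\ne\hat P$, whence $\{\Pi(\tilde V^n(\tilde x^{(0)}))\}=\{A^n(\Pi(\tilde x^{(0)}))\}$ does not converge and has infinite $\omega$-limit set inside $\partial S^2$. Since $\Pi$ is continuous and $S^3$ compact, $\Pi$ maps $\omega(\tilde x^{(0)})$ onto $\omega_A(\Pi(\tilde x^{(0)}))$, so $\omega(\tilde x^{(0)})$ is infinite and $\tilde V^n(\tilde x^{(0)})$ cannot converge; and $\Pi(\tilde V^n(\tilde x^{(0)}))\to\partial S^2$ gives $y^{(n)}z^{(n)}(x^{(n)}+t^{(n)})\to0$, whence, using $x^{(n)}t^{(n)}\le\tfrac14(x^{(n)}+t^{(n)})$, we get $x^{(n)}y^{(n)}z^{(n)}t^{(n)}\to0$, so $\omega(\tilde x^{(0)})\subset\partial S^3$.

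The genuinely nontrivial step is spotting the coordinates $u_1=y$, $u_2=z$, $u_3=x+t$ that semiconjugate $\tilde V$ to $A$ — in particular verifying that $(x+t)'$ closes up to $(x+t)[1+p(y-z)]$; once this is in hand, part~4 reduces to the cited behaviour of the $3$-cycle operator together with routine continuity and compactness arguments, and parts~1--3 are elementary.
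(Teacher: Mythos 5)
Your proof is correct. Parts 1--3 are the same elementary computations as in the paper (the paper parameterizes $MN$ as $(u,\tfrac13,\tfrac13,\tfrac13-u)$ and argues via monotonicity/concavity and the derivatives $\psi'(0)=1+p/3$, $\psi'(1/3)=1-p/3$ of $\psi(u)=u(1+p(\tfrac13-u))$, while you argue via the sign of $\phi(u)-u=-\tfrac p3u(1-u)$ and monotone convergence -- same substance). Part 4 is where you genuinely diverge: the paper simply cites the general theory of Volterra operators (\cite{G} and Theorem 2.4 of \cite{GMR}), whereas you construct the explicit factor map $\Pi(x,y,z,t)=(y,z,x+t)$, verify the semiconjugacy $\Pi\circ\tilde V=A\circ\Pi$ onto the $3$-cycle Volterra operator $A$ on $S^2$ already analysed in the paper, observe $\Pi^{-1}(\hat P)=MN$, and transfer the known behaviour of $A$ back by continuity, compactness and the inequality $xt\le\tfrac14(x+t)$. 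Your route is longer but more self-contained and arguably more robust: the general theorem as the paper later restates it (in the proof of Theorem \ref{tt5}) assumes an isolated fixed point in the interior of the simplex, a hypothesis $\tilde V$ does not satisfy (it has no interior fixed point at all), so the paper's citation requires interpreting the general Volterra theory on the relevant face rather than a literal application; your semiconjugacy avoids that issue, pins down the exceptional set $MN$ exactly as the preimage of $\hat P$, and yields both the non-convergence and the containment $\omega(\tilde x^{(0)})\subset\partial S^3$ from facts already quoted in the paper. The only implicit assumption, which you state, is $p\ne 0$, consistent with the intent of the proposition.
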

\begin{proof} 1. Follows from simple analysis of the equation $\tilde V(x)=x$.

2. We take arbitrary element $\eta=(u, {1\over 3}, {1\over 3}, {1\over 3}-u)\in MN$, (where $u\in [0,1/3]$) and show that $\tilde V(\eta)\in MN$:
we have
$$\tilde V(\eta): \left\{\begin{array}{llll}
x'=u[1+p(1/3-u)]\\[2mm]
y'=(1/3)[1-p(u-(1/3)+((1/3)-u))]=1/3\\[2mm]
z'=(1/3)[1+p(u-(1/3)+((1/3)-u))]=1/3\\[2mm]
t'=((1/3)-u)[1-p(u-(1/3)+(1/3))]=1/3-x'
\end{array}\right.$$
Hence $\tilde V(\eta)\in MN$.

3. Restriction of the operator $\tilde V$ on $MN$ gives the one dimensional mapping $\psi: [0, {1\over 3}]\to [0, {1\over 3}]$,
with $\psi(u)=u(1+p({1\over 3}- u))$. This function is monotone increasing on $[0, {1\over 3}]$,  concave for $p>0$ and convex for $p<0$.
It has two fixed points $0$ and $1/3$ with
$\psi'(0)=1+p/3$ and $\psi'(1/3)=1-p/3$. From these properties follows the part 3.

4. This property is known for general Volterra operators (see \cite{G} and Theorem 2.4 in \cite{GMR}).
\end{proof}
Denote
$$\overline{MN}=\{(x,0): x\in MN\}.$$
For a set $U\subset S^4$ denote
$$T_\pi U=\{T_\pi u: \ \ u\in U\}.$$
$$U^\pi=U\cup T_\pi U\cup T^2_\pi U\cup T^3_\pi U\cup T^4_\pi U.$$

For any $x\in \partial S^4$ denote by $\tilde x$ the vector constructed
from $x$ by omitting zero coordinates.

As a corollary of Proposition \ref{pp} we get
\begin{thm}\label{tt4} For any point $x^{(0)}=(x^{(0)}_1, x^{(0)}_2, x^{(0)}_3, 1-x^{(0)}_1-x^{(0)}_2-x^{(0)}_3, 0)$ we have
\begin{itemize}
\item[a)] If $ x^{(0)}\in \overline{MN}^\pi$ then the trajectory $x^{(n)}$ converges to 5-periodic orbit started at $\left({1\over 3}, {1\over 3}, {1\over 3}, 0, 0\right)$ mentioned in Proposition \ref{3p}.
\item[b)] If  $x^{(0)}\notin \overline{MN}^\pi$   the trajectory does not converge and its
limit points set is infinite and lies on the boundary of $S^4$.
\end{itemize}
\end{thm}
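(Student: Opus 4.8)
The plan is to deduce Theorem \ref{tt4} directly from Proposition \ref{pp} together with Lemma \ref{lT}, by carefully tracking how zero coordinates travel under $W$. First I would recall the set-up: for an initial point $x^{(0)}$ with $x^{(0)}_5=0$, part 3 of Lemma \ref{lT} gives $W^{5k}=V^{5k}$, and since the fifth coordinate stays zero under $V$ exactly when the relevant Volterra block acts, the restriction of $W^{5k}$ to the face $\{x_5=0\}$ is precisely $\tilde V^{5k}$ acting on the first four coordinates $\tilde x=(x_1,x_2,x_3,x_4)$. (This is the identification already asserted just above the statement of Proposition \ref{pp}.) So the whole behaviour of the subsequence $x^{(5k)}$ is governed by the dynamics of $\tilde V$ on $S^3$, which Proposition \ref{pp} describes completely.

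For part (a), suppose $x^{(0)}\in\overline{MN}^\pi$. By definition of $U^\pi$ this means $x^{(0)}=T_\pi^i(y)$ for some $y\in\overline{MN}$ and some $i\in\{0,1,2,3,4\}$; equivalently, after rotating coordinates we may assume $x^{(0)}\in\overline{MN}$, i.e. $\tilde x^{(0)}\in MN$. Then part 3 of Proposition \ref{pp} gives $\tilde V^n(\tilde x^{(0)})\to M$ if $p>0$ and $\to N$ if $p<0$, hence $x^{(5k)}\to (\tfrac13,\tfrac13,\tfrac13,0,0)$ (for $p>0$) or $\to(0,\tfrac13,\tfrac13,\tfrac13,0)$ (for $p<0$). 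In either case the limit point is one of the five vectors in the 5-periodic orbit through $(\tfrac13,\tfrac13,\tfrac13,0,0)$ of Proposition \ref{3p}. To conclude that $x^{(5k+j)}\to W^j$ applied to that limit point, I would use continuity of $W$ together with $W^{5k+j}=T_\pi^j\circ V^{5k}=T_\pi^j\circ W^{5k}$ (again Lemma \ref{lT}, part 3, since $5k+j=5k+j$ and $W^{5k}=V^{5k}$): passing to the limit in $k$ shows $x^{(5k+j)}\to T_\pi^j(\text{limit})$, and the five images $T_\pi^j(M)$, $j=0,\dots,4$, are exactly the five points of that periodic orbit. Thus the trajectory $x^{(n)}$ converges to the 5-periodic orbit started at $(\tfrac13,\tfrac13,\tfrac13,0,0)$, which is the claim. (If $x^{(0)}$ was in $T_\pi^i\overline{MN}$ with $i\ne0$, the argument is identical after an initial cyclic shift, since the orbit in question is itself $T_\pi$-invariant.)

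For part (b), suppose $x^{(0)}\notin\overline{MN}^\pi$; after the same reduction we may assume $x^{(0)}$ lies in the face $\{x_5=0\}$ with $\tilde x^{(0)}\notin MN$. If $\tilde x^{(0)}$ has a coordinate equal to zero, then $x^{(0)}$ lies in a lower-dimensional stratum already handled by Theorems \ref{tt1}, \ref{tt2}, \ref{tt3} (one checks that in each of those strata, unless the point is on $\overline{MN}^\pi$ or is the fixed point, the limit-point set is either the vertex orbit $\{e_1,\dots,e_5\}$ or an infinite subset of $\partial S^4$ — in all cases infinite and on the boundary). If $\tilde x^{(0)}$ has four nonzero coordinates and is not on $MN$, then part 4 of Proposition \ref{pp} applies: $\tilde V^n(\tilde x^{(0)})$ does not converge and its $\omega$-limit set is infinite and lies on $\partial S^3$. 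Hence the subsequence $x^{(5k)}$ has infinite $\omega$-limit set lying on $\partial S^4$; since the full trajectory contains this divergent subsequence it cannot converge, and $\omega(x^{(0)})$, containing the infinite $\omega$-limit set of $x^{(5k)}$, is infinite and contained in $\partial S^4$ (because the $\omega$-limit set of each of the five subsequences $x^{(5k+j)}=T_\pi^j(x^{(5k)})$ is $T_\pi^j$ of an infinite boundary set, hence infinite and on the boundary). This proves (b).

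The step I expect to require the most care is the bookkeeping that transfers the $\tilde V$-statements on $S^3$ back to $W$-statements on $S^4$: one has to be sure (i) that the five subsequences $x^{(5k+j)}$, $j=0,\dots,4$, are genuinely governed by $T_\pi^j\circ\tilde V^{5k}$ and therefore share the ``non-convergence, infinite boundary $\omega$-limit'' or ``convergence to a vertex of the periodic orbit'' behaviour, and (ii) that ``converges to the 5-periodic orbit'' is interpreted in the sense of Proposition \ref{3p}, namely that each residue-class subsequence converges to the corresponding point of the orbit. Both points follow cleanly from Lemma \ref{lT}(3) and continuity of $W$, but they are the substance of the argument; the rest is invoking Proposition \ref{pp} verbatim.
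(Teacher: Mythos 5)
Your overall route is the same as the paper's: there Theorem \ref{tt4} is given as a direct corollary of Proposition \ref{pp}, with the identification $x^{(5k)}=\tilde V^{5k}(\tilde x^{(0)})$ on the face $\{x_5=0\}$ coming from Lemma \ref{lT}, exactly as you set it up. Two points in your write-up need repair, though. First, the identity you invoke, $W^{5k+j}=T_\pi^j\circ V^{5k}$ (equivalently $x^{(5k+j)}=T_\pi^j(x^{(5k)})$), is not what Lemma \ref{lT}(3) says: the correct statement is $W^{5k+j}=T_\pi^j\circ V^{5k+j}$, i.e. $x^{(5k+j)}=W^j(x^{(5k)})$. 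Your conclusions survive because the continuity argument you also mention is all that is needed: in part (a) the limit of $x^{(5k)}$ is a point of the orbit of Proposition \ref{3p}, and such points are fixed by $V$ (they solve $W(x)=T_\pi(x)$), so $W^j$ of the limit equals $T_\pi^j$ of it; in part (b) one applies the continuous map $W^j$, which preserves the boundary (the number of zero coordinates is invariant under $W$), to the limit points of the subsequence $x^{(5k)}$. But as written the identity is false and should be replaced by this argument.

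Second, and more substantively, your handling of initial points with a zero among the first four coordinates is wrong: you claim that in those lower strata the limit set is ``in all cases infinite and on the boundary'', but by Theorems \ref{tt1} and \ref{tt3} the limit set there is the finite set $\{e_1,\dots,e_5\}$, so part (b) would be false for such points. The paper avoids this by explicitly restricting the subsection (hence Theorem \ref{tt4}) to boundary points whose first four coordinates are all nonzero (``we will not consider the faces\dots''); read literally with the faces included, the statement of (b) fails. So rather than folding those points into (b), you should exclude them from the theorem's scope and note that they are already covered by Theorems \ref{tt1}--\ref{tt3}. With these two corrections your argument coincides with the paper's intended proof.
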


\subsection{Inside of the simplex}

\begin{thm}\label{tt5} For any $x^{(0)}\in {\rm int}S^4$ the set of limit point of its trajectory (under operator (\ref{w1}), denoted by $W$)
is infinite and lies on the boundary of the simplex $S^4$.
\end{thm}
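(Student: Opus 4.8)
The plan is to reduce the interior case to the Volterra operator $V$ (the permuted form of $W$) via Lemma \ref{lT}, and then invoke the classical structure theory of Volterra QSOs on the simplex. Concretely, by part 3 of Lemma \ref{lT} we have $W^{5k}=V^{5k}$, so for $x^{(0)}\in {\rm int}\,S^4$ the subsequence $x^{(5k)}=V^{5k}(x^{(0)})$ is exactly the trajectory of $x^{(0)}$ under the iterated Volterra operator $V^5$. Since $V$ preserves ${\rm int}\,S^4$ (each coordinate of $V(x)$ is a product $x_i\cdot[\text{positive factor}]$ when all $x_j>0$, for $|p|\le 1$), the whole orbit $\{x^{(5k)}\}$ stays in ${\rm int}\,S^4$. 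The first step is therefore to record that it suffices to analyze $\omega_V(x^{(0)})$, the $\omega$-limit set of $x^{(0)}$ under $V$, and then take its $T_\pi$-orbit: $\omega_W(x^{(0)})=\bigcup_{i=0}^{4}T_\pi^i\bigl(\omega_{V^5}(x^{(0)})\bigr)$.

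The second step is to show $\omega_V(x^{(0)})\subset \partial S^4$ for every interior $x^{(0)}$. Here I would quote the general Volterra theory already used repeatedly in the paper (see \cite{G} and Theorem 2.4 in \cite{GMR}, invoked verbatim in the proof of Proposition \ref{pp}, part 4): for a Volterra QSO the function $\varphi(x)=\prod_{i} x_i^{x_i^{*}}$ (Lyapunov-type, built from an interior fixed point $x^{*}$) is strictly monotone along trajectories unless the trajectory is itself a fixed point; consequently no interior, non-fixed-point trajectory can converge, and more strongly its limit set lies on $\partial S^4$. The only interior fixed point of $V$ is the center $P=(1/5,\dots,1/5)$ — this is exactly the fixed-point computation of the Proposition above, since fixed points of $W$ in ${\rm int}\,S^4$ coincide with those of $V$ there (the $T_\pi$-factor only permutes coordinates, and $P$ is $T_\pi$-invariant) — and by Proposition \ref{pb} that fixed point is a repeller for $p\ne 0$. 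So for $x^{(0)}\ne P$ the trajectory genuinely moves, the Lyapunov argument applies, and $\omega_V(x^{(0)})\subset\partial S^4$; for $x^{(0)}=P$ the orbit is constant and the claim of the theorem still holds because $P\in\partial S^4$ is false — wait, $P\notin\partial S^4$, so the single point $x^{(0)}=P$ must be excluded, and indeed the theorem should be read (as in Proposition \ref{pb} and the abstract) for $x^{(0)}\ne P$; I would state this caveat explicitly.

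The third step transports the conclusion back to $W$. Since $T_\pi$ is a coordinate permutation, it maps $\partial S^4$ to itself, so $\omega_W(x^{(0)})=\bigcup_{i=0}^4 T_\pi^i(\omega_{V^5}(x^{(0)}))\subset\partial S^4$; and infiniteness of $\omega_{V^5}(x^{(0)})$ (which follows from the Volterra theory, since a convergent subsequence-limit would have to be a fixed point of $V$ on the boundary, and one checks as in the lower-dimensional cases that these attract nothing from the interior) passes to $\omega_W(x^{(0)})$ because a finite union of sets is infinite iff one of them is. The main obstacle is purely one of citation hygiene: the ``general Volterra" facts are used here in the full five-dimensional interior, whereas the earlier subsections only needed them in dimensions $2$ and $3$; I would either cite \cite{G}, \cite{GMR} for arbitrary dimension or include a two-line reminder of the Lyapunov-function argument (strict monotonicity of $\prod x_i^{x_i^*}$, forcing $\omega$-limit sets of non-fixed interior points onto $\partial S^4$) so the five-dimensional application is self-contained. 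Everything else — the reduction via Lemma \ref{lT}, the identification of $P$ as the unique interior fixed point, and the $T_\pi$-equivariance of $\partial S^4$ — is routine given the results already established.
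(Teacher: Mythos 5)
Your overall strategy is sound and reaches the right conclusion, but it is a genuinely different route from the paper's for the first half of the claim. The paper does \emph{not} outsource the boundary statement to general Volterra theory: it works directly with $\varphi(x)=x_1x_2x_3x_4x_5$, shows $\varphi(W(x))\le\varphi(x)$ via the arithmetic--geometric mean bound (\ref{ch1}), and then rules out a positive limit $\phi>0$ by noting that $\phi>0$ would force $\psi(x^{(n)})\to 1$, hence $x^{(n)}\to P$ (since $\psi\le 1$ with equality only at $P$), which contradicts Proposition \ref{pb} ($P$ is a repeller). Only for the infiniteness does the paper switch to $V$ via Lemma \ref{lT} and cite the non-convergence theorem (Theorem 2.4 of \cite{GMR}). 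Your version instead cites the general Lyapunov-function theory for Volterra operators ($\prod_i x_i^{x_i^*}$ strictly decreasing off fixed points) together with non-convergence to get $\omega_V(x^{(0)})\subset\partial S^4$; this is legitimate and in fact mirrors how the paper itself argues in Proposition \ref{pp}(4), and it makes Proposition \ref{pb} logically unnecessary (you only use it decoratively). Your observation that $x^{(0)}=P$ (and, one should add, $p=0$) must be excluded from the statement is a fair point the paper glosses over. One small inaccuracy: since $W^{5k+i}=T_\pi^i\circ V^{5k+i}$, the correct decomposition is $\omega_W(x^{(0)})=\bigcup_{i=0}^{4}T_\pi^i\bigl(V^i(\omega_{V^5}(x^{(0)}))\bigr)$, not $\bigcup_i T_\pi^i(\omega_{V^5}(x^{(0)}))$; this is harmless for your conclusions because $T_\pi^i\circ V^i$ preserves $\partial S^4$ and injectivity preserves infinitude, but it should be stated correctly.

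The one step where your justification as written does not hold up is the infiniteness of the limit set. You argue that ``a convergent subsequence-limit would have to be a fixed point of $V$ on the boundary''; that is false --- $\omega$-limit points of a non-convergent trajectory need not be fixed points, and non-convergence by itself only yields at least two limit points, not infinitely many. The clean patch (which also tightens the paper's own rather terse final sentence) is: if $\omega_V(x^{(0)})$ were finite, then, being the $\omega$-limit set of a continuous map on a compact set, it would be a single periodic orbit of $V$; but a Volterra QSO has no periodic orbits other than fixed points (\cite{G}, as recalled in Section 3), so the trajectory would converge, contradicting Theorem 2.4 of \cite{GMR}. With that replacement, and the corrected decomposition above, your argument is complete.
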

\begin{proof} For any $x\in {\rm int} S^4$ denote $\varphi(x)=x_1x_2x_3x_4x_5$.
Estimate $\varphi(x')=\varphi(W(x))$, using (\ref{ch1}) we get
$$\varphi(x')\leq \varphi(x).$$
Iterating this inequality we get  $\varphi(x^{(n+1)})\leq \varphi(x^{(n)}).$
Thus the limit  $\lim_{n\to \infty}\varphi(x^{(n)})=\phi\geq 0$ exists.
We claim that $\phi=0$. Indeed, assuming on the contrary $\phi>0$ we get
\begin{equation}\label{bir}
1=\lim_{n\to \infty} {\varphi(x^{(n)})\over \varphi(x^{(n-1)})}=\lim_{n\to \infty}\psi(x^{(n)}).
\end{equation}
As in (\ref{ch1}) and (\ref{ch2}) we have
\begin{equation}\label{ps}
\max_{x\in S^4}\psi(x)=\psi(x^*)=1 \ \ \mbox{if and only if} \ \
x^*=P=\left({1\over 5}, {1\over 5},  {1\over 5}, {1\over 5}, {1\over 5}\right).
\end{equation}
Next we show that if $\phi>0$, then $x^{(n)}\to P$ as $n\to \infty$.
Suppose the converse: there is a sequence $\{n_k\}_{k=1,2,\dots}$ such that
\begin{equation}\label{ne}
\lim_{k\to\infty}x^{(n_k)}=\nu\ne P.
\end{equation}
Since $\psi$ is a continuous function, it follows from (\ref{ps}) and (\ref{ne}) that
\begin{equation}\label{nee}
\lim_{k\to\infty}\psi(x^{(n_k)})=\psi(\nu)<1.
\end{equation}
But since $\nu\ne P$, inequality (\ref{nee}) contradicts (\ref{bir}) and hence $x^{(n)}\to P$ as $n\to \infty$.
But this is impossible, because by Proposition \ref{pb} the unique fixed point $P$ is repeller. Thus
\begin{equation}\label{0b}
 \lim_{n\to \infty}\varphi(x^{(n)})=\lim_{n\to \infty}x^{(n)}_1x^{(n)}_2x^{(n)}_3x^{(n)}_4x^{(n)}_5=0.
 \end{equation}
 Since $\varphi(x)>0$ for all $x\in {\rm int}S^4$, it follows that the equality $\varphi(x)=0$ can only be
 on $\partial S^4$, consequently (\ref{0b}) gives $\omega(x^{(0)})\subset \partial S^4$.

To show that  $\omega(x^{(0)})$ is an infinite set, we use Lemma \ref{lT},
i.e., consider the Volterra operator $V$, defined by $V=T^{-1}_\pi\circ W$ (see (\ref{w1})).

For general Volterra operator the following is known (see Theorem 2.4 in \cite{GMR}):

{\it  If a Volterra operator has an isolated fixed point $x^*\in {\rm int}S^{m-1}$, then for any initial
point $x^{(0)} \in {\rm int}S^{m-1}\setminus \{x^*\}$ the trajectory $\{x^{(n)}\}$ does not converge}.

This fact and Lemma \ref{lT} completes the proof.
\end{proof}

\section{Conclusion}
In this paper we presented quadratic operator describing
the dynamics of the CFEP. Let $x = (x_1, x_2, x_3, x_4, x_5)\in S^4$ be an
initial state, i.e. the probability distribution on the set
$$\{Wood, Fire,  Earth,  Metal,  Water\}.$$

The following are interpretations of our results to CFEP:

\begin{itemize}
\item Non existence of a fixed point on the boundary (Lemma \ref{lf1}) means that if some element absent at initial time (i.e.
$x_i=0$ for some $i$) then in future this element surely appears.

\item The repeller unique fixed point (Proposition \ref{pb}) means that the CFEP system
does not have an equilibrium state (if at initial time it was out of the equilibrium).

\item Existence of 5-periodic orbits $e_1\to \dots \to e_5\to e_1$ (Proposition \ref{3p}) means that if at initial time
we had only one element then periodically (by the direction of generating interactions, see Fig. \ref{ff1})
at each future time we will have only one of five elements.

\item Existence of 5-periodic orbits consisting ${1\over 3}$ (Proposition \ref{3p}) means that if at initial time
with equal probabilities we had only three consecutive elements (by the direction of generating interactions)
then at each future time we will have only three consecutive elements.

\item A convergence to the periodic orbit $e_1\to \dots \to e_5\to e_1$ (Theorem \ref{tt1})
means that if at an initial time we had only two elements (not necessary consecutive ones)
then as time goes to infinity the state of the CFEP will
change periodically, and at each time only one of elements will be present.

 \item Theorem \ref{tt2} says that if at an initial time we had only three consecutive elements
then as time goes to infinity the states of the CFEP do not have any periodicity,
 but asymptotically at each time only two elements remain.

\item Theorem \ref{tt3} has interpretation that if at an initial time we had only three non-consecutive elements
then as time goes to infinity the state of the CFEP will
change periodically, and at each time only one of elements will be present.

\item By Theorem \ref{tt4} we conclude that if initially we had four elements (two of them with probability ${1\over 3}$)
then future states of CFEP will be asymptotically 5-periodic containing only three elements. If initially we had four
elements (but not with two of them with probability ${1\over 3}$) then as time goes to infinity the states of the CFEP do not have any periodicity,
 but asymptotically at each time only three elements remain.

 \item If initially we had five
elements (Theorem \ref{tt5}) then as time goes to infinity the states of the CFEP do not have any periodicity,
 but asymptotically at each time four elements remain.

   \end{itemize}

\section*{ Acknowledgements}

 Rozikov's work was supported as a visiting scholar at International Islamic
 University Malaysia through the MOHE Grant (FRGS17-027-0593).

\end{document}